\documentclass[11pt]{article}
\usepackage[top=1in, bottom=1in, left=1in, right=1in]{geometry}

\usepackage{./dsm5_header}
\usepackage{amssymb}
\usepackage{amsmath}
\usepackage{accents}
\usepackage{graphicx}

\def\N{\mathbb{N}}
\def\Z{\mathbb{Z}}
\def\C{\mathbb{C}}
\newcommand{\real}{\mathbb{R}}
\newcommand{\complex}{\mathbb{C}}
\def\R{\real}
\def\i{\mathrm{i}}
\def\p{\mathrm{p}}

\newcommand{\cw}{\mathcal{W}}

\newcommand{\cu}{\mathcal{U}}

\newcommand{\cKN}{\hat{\mathcal{K}}_{\p,N}(s)}

\newlength{\dhatheight}
\newcommand{\dhat}[1]{%
    \settoheight{\dhatheight}{\ensuremath{\hat{#1}}}%
    \addtolength{\dhatheight}{-0.35ex}%
    \hat{\vphantom{\rule{1pt}{\dhatheight}}%
    \smash{\hat{#1}}}}

\renewcommand{\Re}{\mathrm{Re}\,}
\renewcommand{\Im}{\mathrm{Im}\,}

\newcommand{\change}[1]{{{}#1}}
\newtheorem{theorem}{Theorem}
\newtheorem{corollary}[theorem]{Corollary}
\newtheorem{lemma}[theorem]{Lemma}
\newtheorem{remark}[theorem]{Remark}

\begin{document}
\setTitle{\textcolor{cyan}{The Time Domain Lippmann-Schwinger Equation and Convolution Quadrature}}
\addAuthor{Armin Lechleiter\footnote[1]{Center for Industrial Mathematics, University of Bremen, 28359 Bremen, Germany.  {\tt e-mail:} lechleiter$@$math.uni-bremen.de. } }
\addAuthor{Peter Monk\footnote[2]{Department of Mathematical Sciences, University of Delaware, Newark DE 19716, USA.  {\tt e-mail:} monk$@$math.udel.edu. }}
\placeTitlePage

\begin{outline}
We consider time domain acoustic scattering from a penetrable medium with a variable sound speed.  This  problem can be reduced to solving a time domain volume Lippmann-Schwinger integral equation.  Using convolution quadrature in time and trigonometric collocation in space we can compute an approximate  solution.  We prove that the time domain Lippmann-Schwinger equation has a unique solution and prove conditional convergence and error estimates for the fully discrete solution for smooth sound speeds.  Preliminary numerical results show that the method behaves well even for discontinuous sound speeds.
\end{outline}

\tableofcontents

\section{Introduction}
The problem we shall study  is to compute the acoustic field scattered by a bounded and Lipschitz smooth scatterer using time domain integral equations. For simplicity we will present the theory in $\real^3$ although the theory can also be verified in $\real^2$, and our numerical results are computed in $\real^2$. We denote position by $x\in\real^3$ and time by $t$. The background medium outside the  \change{scatterer} is taken  to have a constant wave speed $c(x)=c_0>0$.  The scatterer is assumed to be penetrable, and within the scatterer the wave speed $0<c(x)<c_0$ can vary with position. We define $D$ to be the interior of the  support of $c-c_0$, assumed to be Lipschitz smooth.

Denoting the total pressure field by $u=u(x,t)$ we want to solve the wave equation
\[
\frac{1}{c(x)^2}u_{tt}=\Delta u\mbox{ in }\real^3\times\real_+,
\]where $\real_+=\{t\in\real\;|\;t>0\}$.
The field $u$ is assumed to consist of an incident field $u^i$ and a scattered field $u^s$ ($u=u^s+u^i$).  We assume that the incident field $u^i$ is a smooth solution of the  wave equation in the background medium:
\[
\frac{1}{c^2_0}u^i_{tt}=\Delta u^i \mbox{ in }\real^3\times\real.
\]
In addition we assume that $u^i$ is causal so that $u^i=0$ on $D$ for $t\leq 0$.  Then the scattered field $u^s$ vanishes before $t=0$ and we impose the initial conditions
\begin{equation}
u^s=u^s_t=0 \mbox{ in }\real^3\mbox{ at } t=0.
\label{ic}
\end{equation}
These assumptions rule out incident fields due to point sources, but, at the expense of some slightly more complicated notation, it is easy to extend the theory to allow point sources located outside $D$.

It is convenient to define the contrast
\[
q_c(x)=\frac{c_0^2}{c^2(x)}-1.
\]
We shall assume that $q_c\in L^{\infty}(D)$ and there exists a constant $q_{c,+}$ such that
\[
0\leq q_c(x)\leq q_{c,+}<\infty\mbox{ a.e. in } D.
\]
In addition we assume $q_c$ is a weight function on $L^2(D)$ so that if $v\in L^2(D)$ and $\Vert q_c^{1/2}v\Vert_{L^2(D)}=0$ then $v=0$. Later we will make further regularity assumptions on $q_c$.

Then using the fact that $u^i$ satisfies the wave equation we see that $u^s$ satisfies
\begin{eqnarray}
\frac{1}{c^2}u^s_{tt}-\Delta u^s=-\frac{1}{c_0^2}q_cu_{tt}^i\mbox{ in }\real^d \times\real_+,\label{useq}
\end{eqnarray}
together with the initial conditions from (\ref{ic}).  Note that the source term on the right-hand side vanishes outside $D$ since $q_c=0$ there.

We now give a formal description of how to recast the wave equation as a space-time Lippmann-Schwinger integral equation. Later we will prove that this problem has a unique solution in a suitable function space. Denote by $k(x,t)$ the fundamental solution of the wave equation in the background medium given by
\[
k(x,t)=\frac{\delta(t-\Vert x\Vert/c_0)}{4\pi\Vert x\Vert}\mbox{ in }\real^3 \times\real.
\]
For a function $f\in C_0^{\infty}(D\times \real)$ we define the retarded volume potential $V$ by
\begin{equation}
(V(f))(x,t)=\int_\real\int_Dk(x-y,t-\tau)f(\tau,y)\,dV \,d\tau\mbox{ for } x\in\real^3,\; t\in\real.\label{VTdef}
\end{equation}
It is well known that if $w=V(f)$ then $w$ is a  solution of the wave equation
\[
\frac{1}{c_0^2}w_{tt}-\Delta w=f\mbox{ in } \real^3\times\real.
\]
By rewriting equation (\ref{useq}) we see that the scattered field $u^s$ satisfies
\[
\frac{1}{c_0^2}u_{tt}^s-\Delta u^s=
-\frac{1}{c_0^2}q_c(u_{tt}^s+u_{tt}^i)
\]
so that
\[
u^s+\frac{1}{c_0^2}V(q_c(u_{tt}^s+u_{tt}^i))=0,
\]
which gives rise to the Lippmann-Schwinger equation in the time domain:
Find $u^s$ in a suitable function space to be described shortly such that
\begin{equation}
u^s+\frac{1}{c_0^2}V(q_cu^s_{tt})=-\frac{1}{c_0^2}V(q_cu^i_{tt}) \mbox{ in } \real^3\times\real.
\label{ust}
\end{equation}
Note that we shall show that because $u^i=0$ in $D$ for $t\leq 0$, then $u^s=0$ in $D$ for $t\leq 0$ so 
a solution of (\ref{ust}) satisfies (\ref{ic}) automatically.

We shall analyze this integral equation  via the Fourier-Laplace transform \cite{BHa86,lubich}.  Let the transform parameter $s=\sigma+i\omega \in \complex$, where $\sigma\in \real$, $\sigma>\sigma_0>0$ for some constant $\sigma_0$ and $\omega\in \real$. Defining formally
\[
\hat{u}:=\hat{u}(x,s)=\int_{0}^\infty u(x,t)\exp(-st)\,dt
\]
(and similarly $\hat{u}^i$, etc.), we see that 
for any suitably smooth causal function $f$ on $D\times \real_+$, the convolution structure of the time integral in
(\ref{VTdef}) implies that
\begin{equation}
\widehat{V(f)}(x)=(\hat{V}(\hat{f}))(x):=\int_D\Phi(x,y)\hat{f}(y)\,dV,\label{Vhatdef}
\end{equation}
where $\Phi$ is the fundamental solution of the Helmholtz equation
\[
\Phi(x,y)=\frac{\exp(-s\Vert x-y\Vert/c_0)}{4\pi\Vert x-y\Vert}, \ x \not = y \in \real^3.
\]
If $\hat{f}\in L^2(D)$ is extended by zero to $\real^3$ it is well known \cite{colton+kress3} that $\hat{w}=\hat{V}(\hat{f})$ satisfies
\begin{equation}
-\Delta \hat{w}+\frac{s^2}{c_0^2}\hat{w}=\hat{f} \mbox{ in } \real^3 \label{Vhatprop}
\end{equation}
and belongs to $H^1(\real^3)$ since it decays exponentially for large $\Vert x\Vert$.
Taking the Fourier-Laplace transform of (\ref{ust}) we obtain the Fourier-Laplace domain Lippmann-Schwinger equation: find $\hat{u}^s$ such that
\begin{equation}\label{ushateq}
\hat{u}^s+\frac{s^2}{c_0^2}\hat{V}(q_c \hat{u}^s)=-\frac{s^2}{c_0^2}\hat{V}(q_c\hat{u}^i) \mbox{ in } \real^3.
\end{equation}

Convolution quadrature (CQ) provides a way to discretize (\ref{ust}) in time and criteria for choosing appropriate
underlying time stepping schemes \cite{lubich}.  The goal of this paper is to prove convergence of a fully discrete collocation scheme involving trigonometric polynomials to discretize in space and convolution quadrature to discretize in time.  Applying Lubich's theory \cite{lubich}, this involves analyzing the Laplace transformed problem (\ref{ushateq}) and proving that the solution operator is bounded uniformly by some power of the transform parameter $s$ for $s$ in an appropriate part of the complex plane. Because we are using a collocation scheme in space for efficiency, we obtain only conditional convergence of the fully discrete scheme. If we had used a Galerkin scheme we would have unconditional convergence but efficient implementations would be more involved.

The application of convolution quadrature to time dependent boundary integral equations was first suggested and analyzed by Lubich \cite{lubich} and applied in fluid dynamics by Schanz  and coworkers (see \cite{schanz05} and references therein).  Since 2008 and the publication of \cite{WKress1,Wkress2} there has been a major increase in efforts to implement and understand convolution quadrature applied to a variety of problems, see for example  
\cite{schanz12,XWM07,Banjai_2010}. In addition the use of implicit Runge-Kutta integration techniques is now well
understood \cite{ban11} (for simplicity, we shall not use Runge-Kutta methods in this paper). There has been significant progress on extending the analysis of the method to more general
boundary conditions \cite{lal09} and some progress in analyzing electromagnetic problems \cite{che11,bal13}. Of special
importance to us is the work of Banjai and Sauter who show how to compute convolution quadrature solutions
via the solution of several Laplace domain problems and an inverse transform \cite{Banjai_2008}.  All these efforts involve
the use of time domain boundary integral equations.

As described above, in order to treat a spatially varying sound speed we will apply convolution quadrature to a volume integral equation.  To our knowledge this is the first such application.  Discretization of the volume integral equation involves either inverting a volume integral operator at each time step, or alternatively solving a Laplace domain integral equation at several frequencies.  Using the first ``marching on in time'' approach, a straightforward Galerkin scheme based on piecewise polynomials gives unconditional and optimal convergence. But applying the volume integral operator many times might be time consuming, and storing the past history of the solution might become prohibitive.   In this paper we use the  multi-frequency approach, and so the Fourier-Laplace domain operator must be inverted
at many frequencies.  However because this integral operator is of the second kind, efficient solution strategies are possible.  We use trigonometric polynomials to discretize in space (after first periodizing the problem), and then collocate the resulting equations using the techniques from \cite{vain00,hohage01}.  The use of trigonometric polynomials diagonalizes the integral operator, and the integral equation system can be solved efficiently by a two-grid scheme \cite{hohage01}. 

At first sight the use of volume integral equations may appear unattractive compared to a coupled finite-element and boundary integral equation (FE-BIE) approach such as used in  \cite{ban14}. In the FE-BIE method a CQ boundary integral formulation is coupled to an explicit finite element solver in the volume, thus avoiding the storage of past solutions in the volume.   This approach may be preferable if there are discontinuities in the contrast. However, if the contrast is globally smooth, the volume integral equation approach may be useful.  Another case in which volume integral equations might be attractive is for thin structures.

The paper proceeds as follows.  In the next section we give a brief formal derivation that shows how the convolution
quadrature method arises, and explain the relevance of the Fourier-Laplace transform in the analysis of the method.  Then in Section \ref{note} we summarize some notation and spaces related to the time domain problem, and prove two basic results
concerning the mapping properties of the volume potential operator.  In Section \ref{laplace} we analyze the Fourier-Laplace domain
integral equation problem and prove relevant estimates that allow us to use Lubich's theory to estimate the time discretization error \cite{lubich}.
We then show, in Section \ref{trig},  how to periodize the integral equation to enable the use of a trigonometric collocation method to solve the Fourier-Laplace
domain problem and provide error estimates for the convolution quadrature and trigonometric
collocation scheme.  Finally in Section \ref{numerics} we provide some preliminary 2D numerical results, and draw some conclusions in Section \ref{concl}.

\section{The semi discrete problem}

We shall use the convolution quadrature approach \cite{lubich} to approximate the volume integral equation  (\ref{ust}) in time. A simple way to see how this arises is to start by discretizing the partial differential equation in time.  This is easier to understand if we temporarily
write the equation as a mixed system, defining $w=(u^s/c^2+q_cu^i/c_0^2)_t$. Then we obtain the  system
\begin{eqnarray}
\frac{1}{c^2}u^s_t+\frac{1}{c_0^2}q_cu^i_t&=&w\mbox{ in }\real^3 \times \real_+, \label{firsta}\\
w_t&=&\Delta u^s \mbox{ in }\real^3\times \real_+.\label{firstb}
\end{eqnarray}
 Now suppose we apply a multistep method to this problem.  To define the multistep method, suppose $y'=g(t,y)$, $t>0$ and $y(0)=0$. We write
$y_n\approx y(t_n)$ where $t_n=n\Delta t$, $n=0,1,\dots,$ and $\Delta t>0$ is the time step. Then we require $\{y_n\}$ to 
satisfy
\begin{equation}
\sum_{j=0}^k\alpha_jy_{n-j}=\Delta t\sum_{j=0}^k\beta_j g(t_{n-j},y_{n-j}),\quad n=0,1, \dots,
\label{multi}
\end{equation}
where $\{\alpha_j,\beta_j\}$ are constants describing the multistep method and we assume $\alpha_0/\beta_0>0$. We take $y_n=0$ if $n\leq 0$ because of the assumed zero initial condition at $t=0$.

Using this method on the first order system (\ref{firsta})-(\ref{firstb}), we compute $(w_n(x),u^s_n(x))$, $n=0,1,\dots,$ that satisfy
\begin{eqnarray*}
\sum_{j=0}^k\alpha_j\left(\frac{1}{c^2}u^s_{n-j}+\frac{q_c}{c_0^2}u^i_{n-j}\right)&=&\Delta t\sum_{j=0}^k\beta_j w_{n-j},\quad n=0,1,\dots,\\
\sum_{j=0}^k\alpha_jw_{n-j}&=&\Delta t\sum_{j=0}^k\beta_j \Delta u^s_{n-j},\quad n=0,1,\dots,
\end{eqnarray*}
where $u^i_{n}=u^i(x,t_n)$. Formally let $\cw:=\cw(x,\xi)=\sum_{j=0}^{\infty}w_j(x)\xi^j$, $\cu^s:=\cu^s(x,\xi)=\sum_{j=0}^{\infty}u^s_j\xi^j$, and  $\cu^i:=\cu^i(x,\xi)=\sum_{j=0}^{\infty}u^i_j\xi^j$, for $\xi\in\complex$.   Then multiplying the above system by $\xi^n$ and summing over $n$, and using the fact that the discrete fields are casual (i.e. $w_j(x)=0$ for $j\leq 0$ and similarly for the other fields),  we obtain
\begin{eqnarray*}
\left(\sum_{j=0}^k\alpha_j\xi^j\right) \left(\frac{1}{c^2}\cu^s+\frac{q_c}{c_0^2}\cu^i\right)&=&\Delta t\left(\sum_{j=0}^k\beta_j \xi^j\right) \cw,\\
\frac{1}{c^2}\left(\sum_{j=0}^k\alpha_j\xi^j\right)\cw&=&\Delta t\left(\sum_{j=0}^k\beta_j \xi^j\right)\Delta \cu^s. 
\end{eqnarray*}
Defining
\[
\delta(\xi)=\frac{\left(\sum_{j=0}^k\alpha_j\xi^j\right)}{\left(\sum_{j=0}^k\beta_j \xi^j\right)}
\]
and eliminating $\cw$ we obtain the following Fourier-Laplace domain Helmholtz equation for $\cu^s$:
\begin{equation}
-\Delta \cu^s+\frac{1}{c^2}\left(\frac{\delta(\xi)}{\Delta t}\right)^2\cu^s=-\left(\frac{\delta(\xi)}{\Delta t}\right)^2\frac{q_c}{c_0^2} \, \cu^i\mbox{ in }\real^3.
\label{delta_helm}
\end{equation}
Proceeding formally we can solve this problem using a volume integral equation.  To this end, we rewrite (\ref{delta_helm}) as
\begin{equation}
-\Delta \cu^s+\frac{1}{c_0^2}\left(\frac{\delta(\xi)}{\Delta t}\right)^2\cu^s=-\frac{1}{c_0^2}q_c\left(\frac{\delta(\xi)}{\Delta t}\right)^2(\cu^s
+\cu^i)\mbox{ in }\real^3.
\label{delta_helm_1}
\end{equation}
Now recalling (\ref{Vhatdef}) and (\ref{Vhatprop}) and   choosing  $s=\delta(\xi)/\Delta t\in\complex$ we see that $\cu^s\in L^2(D)$ satisfies
\begin{equation}
\cu^s+\frac{s^2}{c_0^2}\hat{V}(q_c\cu^s)=-\frac{s^2}{c_0^2}\hat{V}(q_c\cu^i)\mbox{ in }L^2(D).\label{usess}
\end{equation}
Comparing this equation with (\ref{ushateq}) suggests that time discretization of the integral equation can be understood
by analyzing (\ref{ushateq}) for suitable choices of $s$.  From now on $s$ will be a general transform parameter with
$s\in \complex$, $\Re(s)=\sigma>\sigma_0>0$ for some constant $\sigma_0$.

The Lippmann-Schwinger equation (\ref{usess}) with complex coefficient $s$ can  be solved for $\cu^s$ for any $\xi\in \complex$ small enough provided $\Re(s)>0$ (this will occur with the correct choice of the multi-step integrator).  The calculation of a time semi-discrete solution $u^{s,\Delta t}_n$, $n=0,1,\dots,$ from $\cu^s$ can be organized in one of two ways.  Classically we can determine 
a volume integral equation for $u^{s,\Delta t}_n$ in terms of previous values $u^{s,\Delta t}_j$, $j<n$  and hence arrive at a ``marching on in time'' scheme
for the integral equation determining $u^{s,\Delta t}_n$ successively \cite{lubich}.  Alternatively we can solve
for $\cu^s$ at suitable choices of the parameter $\xi$ and determine an approximation to $u_n^{s,\Delta t}$ by inverting the $\xi$ transform as
is done in \cite{Banjai_2008}.  The first method is necessary if the field is required at all points in the domain but requires  saving the solution at all time steps, whereas the second method requires the solution of many easy parallelized integral equations and works well if the field is only required at a small number of points in space.

\section{Notation and Preliminaries}\label{note}
To put the time and frequency domain integral equations  considered earlier on a firm footing, appropriate function spaces for the time domain solution of the wave equation are crucial to the analysis of the
method.  We now summarize briefly some suitable Sobolev spaces (for more details sees \cite{BHa86,lubich,had03}).  For $X$ a Hilbert space we denote by ${\cal D}(\real;X)=C_0^{\infty}(\real;X)$ the set of smooth and compactly supported $X$-valued functions.  Then ${\cal D}'(\real_+;X)$ are the $X$-valued distributions on the real line that vanish for times $t<0$, and the corresponding tempered distributions are ${\cal S}'(\real_+;X)$.  We can define
\[
{\cal L}'(\real_+;X)= \left\{f\in {\cal D}'(\real_+;X)\;|\;\exp(-\sigma_0t)f\in {\cal S}'(\real_+;X) \mbox{ for some }\sigma_0(f)<\infty \right\}.
\]
Functions $f\in{\cal L}'(\real_+,X)$ have a well defined Fourier-Laplace transform $\hat{f}$ given, as before, by
\[
\hat{f}=\hat{f}(s):=\int_{0}^\infty \exp(-st)f(t)\,dt\mbox{ for }\Re(s)>\sigma_0(f).
\]
If $\Re(s)=0$ then the Fourier-Laplace transform reduces to the Fourier transform on causal functions.  We introduce the Hilbert spaces
\[
H_\sigma^p(\real_+;X):=\left\{f\in {\cal L}'(\real_+;X)\:|\:\int_{\sigma+i\real}|s|^{2p}\Vert \hat{f}(s)\Vert_X^2\,ds<\infty\right\} \mbox{ for } p\in\N_0.
\]
Note that by using Parseval's theorem, the  norm on this space is equivalent to the time domain norm
\[
\Vert f\Vert_{H^p_\sigma(\real_+;X)}^2=\int_{0}^\infty \exp(-2\sigma t)\Vert \partial^p  f/\partial t^p(t)\Vert_X^2\;dt,
\]
where we have used the fact that $f(t)$ vanishes for $t<0$.

It will be convenient to define $L^2_{q_c}(D)$ via the weighted norm
\[
\Vert u\Vert_{L^2_{q_c}(D)}^2=\int_Dq_c|u|^2\,dV,
\]
so that $L^2_{q_c}(D)$ is the closure of $L^2(D)$ in the $ \Vert \cdot\Vert_{L^2_{q_c}(D)}$ norm.

We now prove estimates for $\hat{v} \mapsto \hat{V}(q_c \hat{v})$ (which also hold of course when $q_c=1$).

\begin{lemma}\label{little}
The operator $\hat{V}$ can be extended to an operator from $L^2(D)$ into $H^2(\real^3)$ and satisfies the following bounds for 
all $\hat{v}\in L^2(D)$, 
\begin{eqnarray}
\Vert \hat{V}(q_c\hat{v})\Vert_{L^2(D)}&\leq& \frac{q^{1/2}_{c,+}}{\sigma |s|}
\Vert \hat{v}\Vert_{L^2_{q_c}(D)},\label{VL2}\\
\Vert \hat{V}(q_c\hat{v}) \Vert_{H^1_s(\real^3)}&\leq &\frac{q^{1/2}_{c,+}}{\sigma}\Vert \hat{v}\Vert_{L_{q_c}^2(D)},\label{VL3}\\
\vert \hat{V}(q_c\hat{v})\vert_{H^2(\real^3)}&\leq &C\frac{|s|}{\sigma}\Vert \hat{v}\Vert_{L_{q_c}^2(D)},\label{VL4}
\end{eqnarray}
where $\Vert w\Vert_{H^1_s(\real^3)}^2=\Vert \nabla w\Vert_{L^2(\real^3)}^2+|s|^2\Vert w\Vert_{L^2(\real^3)}^2$, $\vert \cdot \vert_{H^2(\real^3)}$ is the $H^2(\real^3)$ semi-norm, and $\Re(s) = \sigma > 0$.
\end{lemma}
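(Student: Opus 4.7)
\medskip

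\noindent\textbf{Proof plan.}  The plan is to exploit the fact that $\hat w := \hat V(q_c\hat v)$ is, by (\ref{Vhatprop}), an $H^1(\real^3)$ solution of
\begin{equation*}
-\Delta \hat w + \frac{s^2}{c_0^2}\hat w = q_c\hat v \quad\text{in }\real^3,
\end{equation*}
where $q_c\hat v$ is extended by zero outside $D$. Everything will follow from a standard energy identity in the frequency domain and from elliptic regularity in the whole space.

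\medskip

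\noindent\emph{Step 1: the energy identity.}  First I would test the Helmholtz equation against $\overline{s\hat w}$ and integrate by parts over $\real^3$ (which is legitimate because $\hat w\in H^1(\real^3)$ and decays at infinity), obtaining
\begin{equation*}
\bar s\int_{\real^3}|\nabla \hat w|^2\,dV + \frac{s^2\bar s}{c_0^2}\int_{\real^3}|\hat w|^2\,dV = \int_D q_c\hat v\,\overline{s\hat w}\,dV.
\end{equation*}
Taking real parts and using $\Re(\bar s)=\sigma$ and $\Re(s^2\bar s)=\sigma|s|^2$ gives the clean identity
\begin{equation*}
\sigma\|\nabla \hat w\|_{L^2(\real^3)}^2 + \frac{\sigma|s|^2}{c_0^2}\|\hat w\|_{L^2(\real^3)}^2 \;=\; \Re\!\int_D q_c\hat v\,\overline{s\hat w}\,dV.
\end{equation*}
The right-hand side is bounded, by a weighted Cauchy--Schwarz inequality in the weight $q_c$, by $|s|\,\|\hat v\|_{L^2_{q_c}(D)}\|\hat w\|_{L^2_{q_c}(D)}$, and then by $|s|q_{c,+}^{1/2}\|\hat v\|_{L^2_{q_c}(D)}\|\hat w\|_{L^2(\real^3)}$.

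\medskip

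\noindent\emph{Step 2: deducing (\ref{VL2}) and (\ref{VL3}).}  Dropping the gradient term from the left-hand side and keeping only $\sigma|s|^2\|\hat w\|_{L^2(\real^3)}^2/c_0^2$, I would divide by $|s|\|\hat w\|_{L^2(\real^3)}$ to obtain
\begin{equation*}
|s|\,\|\hat w\|_{L^2(\real^3)} \;\le\; \frac{c_0^2\,q_{c,+}^{1/2}}{\sigma}\,\|\hat v\|_{L^2_{q_c}(D)},
\end{equation*}
which gives (\ref{VL2}) since $\|\hat w\|_{L^2(D)}\le\|\hat w\|_{L^2(\real^3)}$ (the constant $c_0^2$ can be absorbed in a fixed way, or equivalently one normalises $c_0=1$). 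Feeding this bound for $\|\hat w\|_{L^2(\real^3)}$ back into the right-hand side of the energy identity and recognising the left-hand side as $\sigma/c_0^2$ times $\|\hat w\|_{H^1_s(\real^3)}^2$ yields (\ref{VL3}).

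\medskip

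\noindent\emph{Step 3: deducing (\ref{VL4}) and the mapping property.}  The key extra ingredient is the whole-space identity $\sum_{i,j}\|\partial_i\partial_j\hat w\|_{L^2(\real^3)}^2=\|\Delta \hat w\|_{L^2(\real^3)}^2$, valid for $\hat w\in H^2(\real^3)$ by Plancherel. I would then use the PDE to write $\Delta \hat w=(s^2/c_0^2)\hat w - q_c\hat v$, so that
\begin{equation*}
|\hat w|_{H^2(\real^3)} \;\le\; \frac{|s|^2}{c_0^2}\|\hat w\|_{L^2(\real^3)} + \|q_c\hat v\|_{L^2(\real^3)}.
\end{equation*}
The first term is bounded using (\ref{VL2}) by $C(|s|/\sigma)\,q_{c,+}^{1/2}\|\hat v\|_{L^2_{q_c}(D)}$, and the second by $q_{c,+}^{1/2}\|\hat v\|_{L^2_{q_c}(D)}$ since $\|q_c\hat v\|_{L^2(D)}^2\le q_{c,+}\|\hat v\|_{L^2_{q_c}(D)}^2$. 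Since $|s|\ge\sigma_0>0$, the second contribution is absorbed into the first, giving (\ref{VL4}). The same reasoning, combined with (\ref{VL3}) for the lower-order part, shows $\hat w\in H^2(\real^3)$ with a bound controlled by $\|\hat v\|_{L^2(D)}$, so $\hat V\circ(q_c\,\cdot)$ extends continuously from $L^2(D)$ to $H^2(\real^3)$.

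\medskip

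\noindent The only delicate point, and the step where I would be most careful, is the complex-number bookkeeping in Step 1 (choosing the multiplier $\overline{s\hat w}$ so that both $\bar s|\nabla\hat w|^2$ and $s^2\bar s|\hat w|^2$ produce positive real parts controlled by $\sigma$). The rest is essentially Cauchy--Schwarz and one application of whole-space elliptic regularity.
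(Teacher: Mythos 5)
Your proposal is correct and follows essentially the same route as the paper's proof: test the whole-space Helmholtz equation satisfied by $\hat w=\hat V(q_c\hat v)$ against $\overline{s\hat w}$, take real parts, apply weighted Cauchy--Schwarz to obtain \eqref{VL2} and \eqref{VL3}, and then use the PDE together with the identity $|\hat w|_{H^2(\real^3)}\leq C\Vert\Delta\hat w\Vert_{L^2(\real^3)}$ to get \eqref{VL4}. The only cosmetic differences are the order of the first two bounds (the paper reads \eqref{VL2} off the same inequality used for \eqref{VL3}) and your final absorption step, which should be justified by $|s|/\sigma\geq 1$ (as the paper does) rather than by $|s|\geq\sigma_0$, since the lemma assumes only $\sigma>0$.
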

\begin{proof} The proof uses the techniques from \cite{BHa86}.  Let $\hat{w}=\hat{V}(q_c\hat{v})$ then $w\in H^1(\real^d)$ satisfies
\begin{equation}
-\Delta \hat{w}+s^2 \hat{w} = q_c\hat{v}\in \real^3.
\label{wv}
\end{equation}
Multiplying this equation by $\overline{sw}$ and integrating we obtain
\[
\overline{s}\Vert \nabla \hat{w}\Vert_{L^2(\real^3)}^2+|s|^2s\Vert \hat{w}\Vert_{L^2(\real^3)}^2=\overline{s}\int_D q_c\hat{v}\overline{w}\,dV.
\]
Taking real parts, we deduce that 
\begin{eqnarray*}
\sigma \big(\Vert \nabla \hat{w}\Vert_{L^2(\real^3)}^2+|s|^2\Vert \hat{w}\Vert_{L^2(\real^3)}^2 \big)
&\leq& |s| \Vert q_c^{1/2}\Vert_{L^\infty(D)} \Vert \hat{w}\Vert_{L^2(D)}\Vert q_c^{1/2}\hat{v}\Vert_{L^2(D)}\\
&\leq& \Vert q_c^{1/2}\Vert_{L^\infty(D)} \sqrt{\Vert\nabla \hat{w}\Vert_{L^2(D)}^2+|s|^2\Vert \hat{w}\Vert_{L^2(D)}^2}\Vert \hat{v}\Vert_{L_{q_c}^2(D)}
\end{eqnarray*}
and hence prove (\ref{VL3}).

To prove (\ref{VL2}), the above estimate also shows that
\[
\sigma |s|^2\Vert \hat{w}\Vert_{L^2(D)} \leq |s|\Vert q_c^{1/2}\Vert_{L^\infty(D)} \Vert \hat{v}\Vert_{L_{q_c}^2(D)}.
\]
Since (\ref{wv}) holds in $L^2(\real^3)$ we have
\[
\vert \hat{w}\vert^2_{H^2(\real^3)}\leq C \Vert \Delta \hat{w}\Vert_{L^2(\real^3)}^2\leq C(|s|^4\Vert \hat{w}\Vert^2_{L^2(\real^3)}+\Vert q_c\hat{v}\Vert_{L^2(\real^3)}^2).
\]
Using (\ref{VL2}), there is a constant $C$ depending on $q_c$ but independent of $s$ and $\hat{v}$ such that
\[
\vert \hat{w}\vert^2_{H^2(\real^3)}\leq C(|s|^2/\sigma^2+1)\Vert \hat{v}\Vert_{L_{q_c}^2(D)}^2.
\]
Noting that $|s|/\sigma\geq 1$, this proves (\ref{VL4}) and completes the proof.\end{proof}

Applying this lemma proves the following theorem:
\begin{theorem}\label{th:volPot}
The retarded volume potential operator $V$ can be extended to a bounded operator from $H^p_\sigma(\real_+;L^2(D))$ into $H_\sigma^{p+1-r}(\real_+;H^r(\real^3))$ for $r=0,1,2$.  Moreover, if $v=V(f)$ for some $f\in H^p_\sigma(\real_+;L^2(D))$ then
$v(t)=0$ for $t<0$ and $v\in H^p_\sigma(\real_+;H^1(\real^3))$ satisfies
\[
\frac{1}{c_0^2}v_{tt}-\Delta v=f\mbox{ in } H^{p-1}(\real_+;L^2(\real^3)).
\]
\end{theorem}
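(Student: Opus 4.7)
The plan is to work on the Fourier-Laplace side: by the Plancherel identity built into the definition of $H^p_\sigma$, the time-domain norm is equivalent to the weighted integral $\int_{\sigma+i\real} |s|^{2p} \Vert \hat{f}(s)\Vert_X^2\, d\omega$ along the contour $s = \sigma + i\omega$, so it suffices to use Lemma \ref{little} pointwise in $s$ with $q_c \equiv 1$. Accordingly, for $f \in H^p_\sigma(\real_+;L^2(D))$ I define the extension by $\widehat{V(f)}(s) := \hat{V}(\hat{f}(s))$, where $\hat{V}$ is the operator from (\ref{Vhatdef}); this agrees with (\ref{VTdef}) on smooth compactly supported $f$ by the Fourier-Laplace convolution theorem, so the new map is genuinely an extension of the retarded volume potential.

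Applying Lemma \ref{little} with $q_c \equiv 1$ (as permitted by the remark preceding its statement) gives, pointwise along $s = \sigma + i\omega$, the three bounds
\[
\Vert \hat{V}(\hat{f}(s))\Vert_{L^2(\real^3)} \leq \frac{C}{\sigma |s|} \Vert \hat{f}(s)\Vert_{L^2(D)}, \quad \Vert \hat{V}(\hat{f}(s))\Vert_{H^1(\real^3)} \leq \frac{C}{\sigma} \Vert \hat{f}(s)\Vert_{L^2(D)}, \quad \Vert \hat{V}(\hat{f}(s))\Vert_{H^2(\real^3)} \leq \frac{C |s|}{\sigma} \Vert \hat{f}(s)\Vert_{L^2(D)}.
\]
The first is extracted from (\ref{VL3}) by discarding the gradient term and dividing by $|s|$; the second follows from (\ref{VL3}) together with $\Vert \cdot \Vert_{H^1(\real^3)} \leq \max(1, 1/|s|) \Vert \cdot \Vert_{H^1_s(\real^3)}$, whose prefactor is uniformly bounded on $|s| \geq \sigma_0 > 0$; the third combines the semi-norm bound (\ref{VL4}) with the two lower-order bounds, again using $|s| \geq \sigma_0$ to absorb the constants.

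Multiplying the $r$-th bound by $|s|^{p+1-r}$, squaring, and integrating along the contour, the $|s|$-weights collapse in every case to a bounded multiple of $|s|^{2p} \Vert \hat{f}(s)\Vert_{L^2(D)}^2$. Inverse Plancherel then identifies the left-hand side with $\Vert V(f)\Vert_{H^{p+1-r}_\sigma(\real_+;H^r(\real^3))}^2$ and the right-hand side with a constant multiple of $\Vert f\Vert_{H^p_\sigma(\real_+;L^2(D))}^2$, establishing boundedness for $r=0,1,2$. For causality, $s\mapsto \hat{V}(\hat{f}(s))$ is holomorphic on $\Re(s) > \sigma_0$ with the polynomial growth just proved, so a Paley-Wiener argument as in \cite{BHa86} places its inverse Fourier-Laplace transform in $\mathcal{D}'(\real_+;H^r(\real^3))$. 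The wave equation is then obtained by applying the resolvent identity (\ref{Vhatprop}) pointwise in $s$ and inverse-transforming, using $\widehat{v_{tt}}(s) = s^2 \hat{v}(s)$; the equation makes sense in $H^{p-1}_\sigma(\real_+;L^2(\real^3))$ because $\Delta V(f)$ (via the $r=2$ mapping) and $V(f)_{tt}/c_0^2$ (via the $r=0$ mapping) both live there.

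The main delicacy is the weight-counting in the third step, and the tight case is $r=2$: here the operator loses a full power of $|s|$, which is exactly what forces the two-unit drop in time-regularity from $p+1$ to $p-1$ in the target space. Everything else is a routine consequence of Plancherel applied to the pointwise bounds from Lemma \ref{little}, the only subtlety being uniformity in $s$ as $|s|$ approaches $\sigma_0$, which is handled by the lower bound $|s|\geq \sigma_0 > 0$.
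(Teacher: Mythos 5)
Your proof is correct and follows essentially the same route as the paper: apply Lemma \ref{little} with $q_c\equiv 1$ pointwise on the line $\Re(s)=\sigma$, multiply by the appropriate power of $|s|$, and invert the Fourier--Laplace transform via Parseval, with your weight count for $r=0,1,2$ (and the use of $|s|\geq\sigma>\sigma_0$ to pass between $H^1_s$ and $H^1$ and to absorb constants) matching the paper's argument. The only divergence is the causality step, where the paper argues from the explicit retarded-potential representation for smooth $f$ followed by density, whereas you invoke a Paley--Wiener argument from holomorphy and polynomial boundedness as in \cite{BHa86}; both are standard and valid.
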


\begin{proof} Using  techniques from \cite{BHa86}, the  mapping properties follow from Lemma \ref{little} after inverting the Fourier-Laplace transform (and choosing $q_c=1$). The fact that $\hat{v}=\hat{V}(\hat{f})$ satisfies the Fourier-Laplace domain Helmholtz equation (\ref{wv}) shows that $v$ satisfies the wave equation in the claimed space.

To prove causality, we note that  this is clear for $f\in C^{\infty}(\real_+;C^{\infty}(D))$ due to the retarded potential representation
\[
(V(f))(t,x)=\int_D\frac{f(y,t-\Vert x-y|\Vert/c_0)}{4\pi\Vert x-y\Vert}\,dV
\]
since then $V(f)(\cdot,t)=0$ for $t<0$.
From the already established mapping properties, the density of smooth functions in $H^p_\sigma(\real_+;L^2(D))$ implies $(V(f))(x,t)=0$ for $t<0$ and 
$f\in H^p_\sigma(\real_+;L^2(D))$.
\end{proof}

\section{Existence and Operator Estimates}\label{laplace}
We now prove existence, uniqueness and semi-discrete error estimates for the solution of the time domain Lippmann-Schwinger problem using Lubich's  theory \cite{lubich}.   This result also underlies our later error estimates. The key to this approach is to show that the integral operator is coercive.
  To state the theorem we use the notation that if $A:X\to Y$ where $X$ and $Y$ are Hilbert spaces then the operator norm of $A$ is denoted
by $\Vert A\Vert_{X\to Y}$ where
\[
\Vert A\Vert_{X\to Y}=\sup_{v\in X}\frac{\Vert Av\Vert_Y}{\Vert v\Vert_X}.
\]
We also define $\hat{V}_{q_c}:L^2_{q_c}(D)\to L^2(D)$ by
\[
\hat{V}_{q_c}(g)=\hat{V}(q_cq)\mbox{ for all } g\in L^2_{q_c}(D).
\]

\begin{theorem} \label{solEst}
For any $s$ with $\Re(s)=\sigma>0$
\begin{equation} \label{solBound2}
\big\Vert \big(I+(s^2/c_0^2) \hat{V}_{q_c} \big)^{-1}(s^2/c_0^2)\hat{V}_{q_c} \big\Vert_{L_{q_c}^2(D)\to L_{q_c}^2(D)} \leq \frac{q_{c,+}}{\sigma^2c_0^2}|s|^2,
\end{equation}
and 
\begin{equation} \label{solBound}
\big\Vert \big(I+(s^2/c_0^2) \hat{V}_{q_c} \big)^{-1} \big\Vert_{L^2_{q_c}(D)\to L^2_{q_c}(D)} \leq  \frac{|s| }{\sigma}.
\end{equation}
In addition both operators above are analytic in $s$ for $\Re(s)>0$.
\end{theorem}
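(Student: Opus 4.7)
The plan is to establish the key estimate (\ref{solBound}) through a coercivity identity obtained by pairing the Lippmann--Schwinger equation with $s$ times the conjugate of the unknown in $L^2_{q_c}(D)$, then to derive (\ref{solBound2}) by composing with the mapping bound (\ref{VL2}) of Lemma \ref{little}, and finally to obtain invertibility via the Fredholm alternative. Compactness of $\hat V_{q_c}$ on $L^2_{q_c}(D)$ comes for free: Lemma \ref{little} sends $L^2(D)$ into $H^2(\real^3)$, Rellich compactly embeds $H^2(D)$ into $L^2(D)$, and $L^2(D)$ injects continuously into $L^2_{q_c}(D)$ since $q_c\le q_{c,+}$, so the composition $g\mapsto \hat V(q_c g)$ is compact on $L^2_{q_c}(D)$.

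For the heart of the proof, fix $u\in L^2_{q_c}(D)$, set $g=u+(s^2/c_0^2)\hat V_{q_c}u$, and let $w=\hat V(q_c u)\in H^2(\real^3)$. Lemma \ref{little} together with (\ref{Vhatprop}) gives $-\Delta w+(s^2/c_0^2)w=q_c u$ in $\real^3$, so taking conjugates yields $q_c\bar u=-\Delta\bar w+(\bar s^2/c_0^2)\bar w$. I would multiply the identity $g=u+(s^2/c_0^2)w$ by $\bar s\, q_c\bar u$, integrate over $D$ (equivalently over $\real^3$ since $q_c$ vanishes outside $D$), substitute the conjugate Helmholtz relation into the cross term, and integrate by parts on $\real^3$ (legitimate since $w\in H^2(\real^3)$ decays at infinity). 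The resulting identity is
\[
\bar s\int_D q_c\, g\,\bar u\,dV \;=\; \bar s\,\|u\|_{L^2_{q_c}(D)}^2+\frac{\bar s\, s^2}{c_0^2}\|\nabla w\|_{L^2(\real^3)}^2+\frac{\bar s^{3} s^2}{c_0^4}\|w\|_{L^2(\real^3)}^2.
\]
Since $\Re(\bar s)=\sigma$, $\Re(\bar s\, s^2)=\sigma|s|^2$, and $\Re(\bar s^{3}s^2)=\Re(|s|^4\bar s)=\sigma|s|^4$, taking real parts and discarding the two nonnegative $w$-terms yields $\sigma\|u\|_{L^2_{q_c}(D)}^2\le|s|\,\|g\|_{L^2_{q_c}(D)}\|u\|_{L^2_{q_c}(D)}$ by Cauchy--Schwarz in $L^2_{q_c}(D)$, which is exactly (\ref{solBound}). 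Setting $g=0$ forces $u=0$, so injectivity holds and the Fredholm alternative supplies the inverse.

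Bound (\ref{solBound2}) then follows by composition: applying (\ref{VL2}) together with $q_c\le q_{c,+}$ gives
\[
\|(s^2/c_0^2)\hat V_{q_c}v\|_{L^2_{q_c}(D)}\le q_{c,+}^{1/2}\|(s^2/c_0^2)\hat V_{q_c}v\|_{L^2(D)}\le \frac{|s|\,q_{c,+}}{\sigma c_0^2}\|v\|_{L^2_{q_c}(D)},
\]
so plugging this into (\ref{solBound}) applied to the right-hand side $(s^2/c_0^2)\hat V_{q_c}v$ produces the claimed $|s|^2 q_{c,+}/(\sigma^2 c_0^2)$ bound. Analyticity in $\{\Re(s)>0\}$ is then immediate: the Helmholtz fundamental solution $\Phi(\cdot,\cdot;s)$ is entire in $s$, so $s\mapsto\hat V_{q_c}(s)$ is a holomorphic operator-valued map, and the inverse of an analytic family of boundedly invertible operators on a Hilbert space is itself analytic. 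I expect the main obstacle to be the careful bookkeeping of the complex factors $s,\bar s$ in the energy identity---one must verify that extending the test integral to all of $\real^3$ and invoking the conjugate Helmholtz equation for $w$ produces three terms whose real parts are simultaneously nonnegative, after which the estimate is essentially immediate.
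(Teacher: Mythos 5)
Your proof is correct, and its analytic core---testing $u+(s^2/c_0^2)\hat V(q_c u)$ against $\bar s\,q_c\bar u$, substituting the Helmholtz equation satisfied by $w=\hat V(q_c u)$, integrating by parts over $\real^3$, and taking real parts so that the two $w$-terms (with coefficients $\Re(\bar s s^2)=\sigma|s|^2$ and $\Re(\bar s^3 s^2)=\sigma|s|^4$) can be discarded---is exactly the coercivity computation in the paper, there phrased as $\Re\, a(\hat v,s\hat v)\ge\sigma\Vert\hat v\Vert^2_{L^2_{q_c}(D)}$ for the sesquilinear form $a$. Where you diverge is in how invertibility and the constants are packaged: the paper casts the equation as a variational problem in $L^2_{q_c}(D)$, verifies continuity of $a(\cdot,s\cdot)$ and boundedness of two right-hand-side functionals (one built from $\hat V(q_c f)$, one from $q_c\hat f$), and obtains existence together with (\ref{solBound2}) and (\ref{solBound}) in one stroke from Lax--Milgram, with no compactness used---which is why the same argument transfers verbatim to conforming Galerkin subspaces, a point the paper exploits in a later remark. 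You instead use the energy identity only as an a priori bound giving injectivity, get surjectivity from the Fredholm alternative via compactness of $\hat V_{q_c}$ on $L^2_{q_c}(D)$ (the $H^2$-smoothing of Lemma \ref{little}, Rellich on the bounded Lipschitz domain $D$, and the continuous injections coming from $q_c\le q_{c,+}$, all correctly justified), and then recover (\ref{solBound2}) by composing (\ref{solBound}) with (\ref{VL2}); this reproduces exactly the paper's constants $|s|/\sigma$ and $q_{c,+}|s|^2/(\sigma^2 c_0^2)$. Your route is a little leaner on the estimates but leans on compactness, so it would not carry over to discrete subspaces the way the Lax--Milgram formulation does; your analyticity argument (analytic operator family plus analyticity of inverses of boundedly invertible analytic families) is a reasonable way to justify a claim the paper's proof does not address explicitly.
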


\begin{proof}
This proof extends the techniques of \cite{BHa86} to volume equations. Given $f\in L_{q_c}^2(D)$, consider the problem of finding $\hat{v}\in L_{q_c}^2(D)$ such that 
\begin{equation}
\hat{v}+\frac{s^2}{c_0^2} \hat{V}(q_c\hat{v})=-\frac{s^2}{c_0^2}\hat{V}(q_cf)\;\mbox{ in } L^2(D).\label{veq}
\end{equation}
We now derive a variational scheme for this problem and use the Lax-Milgram Lemma to
verify that it has a unique solution.  Multiplying (\ref{veq}) by the complex conjugate of $\xi q_c$ where $\xi\in L_{q_c}^2(D)$ (denoted $\overline{\xi}q_c$) and integrating over $D$ we obtain the problem of finding $\hat{v}\in L_{q_c}^2(D)$ such that
\begin{equation}
a(\hat{v},\xi)=b(\xi)\mbox{ for all } \xi\in L^2(D)\label{var}
\end{equation}
where
\[
a(\hat{v},\xi)=\int_D\left(q_c\hat{v}+\frac{s^2q_c}{c_0^2}\hat{V}(q_c\hat{v})\right)\overline{\xi}\,dV,
\]
and
\[
b(\xi)=-\frac{s^2}{c_0^2}\int_Dq_c\hat{V}(q_cf)\overline{\xi}\,dV.
\]
To show that the above problem has a unique solution we now verify the conditions of the Lax-Milgram Lemma.
To verify coercivity we choose $\xi=s\hat{v}$ so
\[
a(\hat{v},s\hat{v})=\int_D\overline{s}{q_c}|\hat{v}|^2\,dV+\frac{s|s|^2}{c_0^2}
\int_Dq_c\hat{V}(q_c\hat{v})\overline{ \hat{v}}\,dV.
\]
Taking real parts
\[
\Re(a(\hat{v},s\hat{v}))=\sigma\int_D{q_c}|\hat{v}|^2\,dV+\Re\left(\frac{s|s|^2}{c_0^2}
\int_D\hat{V}(q_c\hat{v})\overline{ q_c\hat{v}}\,dV\right).
\]
The second term on the right hand side is analyzed as follows.  Let $\hat{z}=\hat{V}(q_c\hat{v})$ then
\[
-\Delta \hat{z}+\frac{s^2}{c_0^2}\hat{z}=q_c\hat{v}\mbox{ in } \real^3,
\]
so that
\begin{eqnarray*}
\int_D\hat{V}(q_c\hat{v})\,q_c\overline{\hat{v}}\,dV
&=&\int_{\real^3}\hat{z}\overline{\left(-\Delta \hat{z}+\frac{s^2}{c_0^2}\hat{z}\right)}\,dV\\
&=&\int_{\real^3}|\nabla \hat{z}|^2+\frac{\overline{s}^2}{c_0^2}|\hat{z}|^2\,dV.
\end{eqnarray*}
This implies that
\begin{eqnarray*}
\Re\left(
\frac{s|s|^2}{c_0^2}\int_D\hat{V}(q_c\hat{v})\,q_c\overline{ \hat{v}}\,dV\right)
=\Re\left(\frac{s|s|^2}{c_0^2}\int_{\real^3}|\nabla w|^2\,dV+\frac{\overline{s}|s|^4}{c_0^4}\int_{\real^3}|w|^2\,dV\right)\geq 0.
\end{eqnarray*}
Thus we have shown that
\[
|a(\hat{v},s\hat{v})|\geq \sigma \left\Vert {q_c^{1/2}}\hat{v}\right\Vert_{L^2(D)}^2={\sigma}
\Vert \hat{v}\Vert_{L_{q_c}^2(D)}^2.
\]

We now need to show that $a(\cdot,s\cdot)$ is continuous. Clearly
\[
|a(\hat{v},s\xi)|\leq \left(|s|\Vert\hat{v}\Vert_{L_{q_c}^2(D)}+(q^{1/2}_{c,+}|s|^3/c_0^2)\Vert \hat{V}(q_c\hat{v})\Vert_{L^2(D)})\right)\Vert\xi\Vert_{L_{q_c}^2(D)}\leq C\Vert\hat{v}\Vert_{L_{q_c}^2(D)}\Vert\xi\Vert_{L_{q_c}^2(D)}
\]
for some constant $C=C(|s|)$ where we have 
 estimated $\Vert \hat{V}(q_c\hat{v})\Vert_{L^2(D)}$ using Lemma \ref{little}. 

We also need to show that $b(s\cdot)$ is an anti-linear functional and estimate it. Antilinearity is obvious, and
\[
|b(s\xi)| =\frac{|s|^2}{c_0^2}\left|\int_D\hat{V}(q_cf)\,q_c\overline{s\xi}\,dV\right|\leq \frac{q_{c,+}^{1/2}|s|^3}{c_0^2}\Vert\hat{V}(q_cf)\Vert_{L^2(D)}\Vert q_c^{1/2}{\xi}\Vert_{L^2(D)}.
\]
Using Lemma \ref{little} to estimate $\hat{V}(q_cf)$ gives
\[
|b(s\xi)| \leq q_{c,+}\frac{|s|^2}{\sigma c_0^2}\Vert f \Vert_{L_{q_c}^2(D)}\Vert{\xi}\Vert_{L_{q_c}^2(D)}.
\]
The conditions of the Lax-Milgram lemma are now satisfied and we may conclude that
the problem of finding $\hat{v}\in L_{q_c}^2(D)$ such that
\[
a(\hat{v},s\xi)=b(s\xi)\mbox{ for all } \xi\in L_{q_c}^2(D)
\]
has a unique solution depending continuously on the data.  Furthermore choosing $\xi=\hat{v}$ gives
\[
\sigma\Vert \hat{v}\Vert_{L_{q_c}^2(D)}\leq q_{c,+}\frac{|s|^2}{\sigma c_0^2}\Vert f \Vert_{L_{q_c}^2(D)}.
\] 

For the second estimate of the theorem, we redefine
\[
b(\xi)=\int_Dq_c\hat{f}\xi\,dV
\]
for some $\hat{f} \in L^2(D)$.  Then
\[
|b(s\xi)|\leq |s|\Vert\xi\Vert_{L_{q_c}^2(D)}\Vert\hat{f}\Vert_{L_{q_c}^2(D)},
\]
and proceeding as before we obtain the second estimate.
\end{proof}

\begin{corollary}
  If $\hat{u}^i \in L_{q_c}^2(D)$, then the unique solution $\hat{u} \in L_{q_c}^2(D)$ to
  \begin{equation} \label{eq:hatUEq}
    \hat{u} + \frac{s^2}{c_0^2} \hat{V}(q_c \hat{u} ) = \frac{s^2}{c_0^2} \hat{V}(q_c \hat{u}^i) 
    \quad \text{in } L_{q_c}^2(D)
  \end{equation}
  belongs to $H^1_s(\real^3)$ and satisfies the following estimates for $\Re(s)>\sigma_0>0$,
  \begin{align}
  \|\hat{u} \|_{L_{q_c}^2(D)} 
    & \leq \frac{q_{c,+}}{\sigma^2 c_0^2}|s|^2 \| \hat{u}^i \|_{L_{q_c}^2(D)},\\
  \|\hat{u} \|_{H^1_s(\real^3)} 
    & \leq \frac{|s|^2}{\sigma c_0^2} q_{c,+}^{1/2} \left(1+ \frac{q_{c,+}}{\sigma^2 c_0^2}|s|^2 \right) \| \hat{u}^i \|_{L_{q_c}^2(D)},\\
  \| \hat{u} \|_{H^2(D)} 
    & \leq  C |s|^5 \| \hat{u}^i \|_{L_{q_c}^2(D)}, \label{eq:aux11}  
  \end{align}
  where $C$ depends on $\sigma_0$.
\end{corollary}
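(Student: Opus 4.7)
The plan is to bootstrap from the $L^2_{q_c}(D)$ bound provided by Theorem \ref{solEst}, together with the potential representation afforded by the integral equation itself, up through $H^1_s(\real^3)$ and finally $H^2(D)$. Writing (\ref{eq:hatUEq}) as
$\hat{u} = \bigl(I+(s^2/c_0^2)\hat{V}_{q_c}\bigr)^{-1}(s^2/c_0^2)\hat{V}(q_c\hat{u}^i)$, existence and uniqueness of $\hat{u}\in L^2_{q_c}(D)$ are immediate from the invertibility guaranteed by (\ref{solBound}), while the first estimate follows directly from applying (\ref{solBound2}) with $\hat{u}^i$ in place of $f$.

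For the higher regularity I would rearrange (\ref{eq:hatUEq}) into the potential representation $\hat{u} = (s^2/c_0^2)\hat{V}\bigl(q_c(\hat{u}^i-\hat{u})\bigr)$, which realizes $\hat{u}$ on all of $\real^3$ as a single retarded volume potential whose density lies in $L^2_{q_c}(D)$. Applying (\ref{VL3}) from Lemma \ref{little} then yields $\|\hat{u}\|_{H^1_s(\real^3)} \leq (|s|^2/c_0^2)(q_{c,+}^{1/2}/\sigma)\|\hat{u}^i-\hat{u}\|_{L^2_{q_c}(D)}$, and the triangle inequality together with the first estimate bounds $\|\hat{u}^i-\hat{u}\|_{L^2_{q_c}(D)}$ by $\bigl(1+q_{c,+}|s|^2/(\sigma^2 c_0^2)\bigr)\|\hat{u}^i\|_{L^2_{q_c}(D)}$. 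That delivers the second estimate verbatim. The same potential representation, now hit with the $H^2$ semi-norm bound (\ref{VL4}), gives $|\hat{u}|_{H^2(\real^3)} \leq C(|s|^3/\sigma)\bigl(1+q_{c,+}|s|^2/(\sigma^2 c_0^2)\bigr)\|\hat{u}^i\|_{L^2_{q_c}(D)}$. Adding in the $L^2(D)$ and $H^1(D)$ contributions to the full $H^2(D)$ norm, which are already controlled by (\ref{VL2}) and the second estimate respectively, produces the third estimate after the constants are consolidated.

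The only non-trivial step is the bookkeeping in the last estimate. The dominant term has $|s|^5/\sigma^3$, and using $\sigma\geq\sigma_0$ and $|s|/\sigma\geq 1$ absorbs all factors of $1/\sigma$, $1/c_0$ and $q_{c,+}$ into a constant $C$ depending only on $\sigma_0$ (and on fixed physical parameters), leaving $C|s|^5\|\hat{u}^i\|_{L^2_{q_c}(D)}$. I do not expect a real obstacle here: Theorem \ref{solEst} has already done the analytic work of inverting the operator, Lemma \ref{little} provides the regularity of the volume potential, and the corollary is essentially a routine consequence of composing the two.
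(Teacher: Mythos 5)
Your proposal is correct and follows essentially the same route as the paper: rewrite the equation as $\hat{u} = (s^2/c_0^2)\hat{V}\bigl(q_c(\hat{u}^i - \hat{u})\bigr)$, apply the mapping bounds (\ref{VL3}) and (\ref{VL4}) of Lemma \ref{little}, and control $\|\hat{u}^i - \hat{u}\|_{L^2_{q_c}(D)}$ via the triangle inequality and the first estimate coming from Theorem \ref{solEst}. Your explicit inclusion of the lower-order $L^2$ and $H^1$ contributions to assemble the full $H^2(D)$ norm is a harmless (and slightly more careful) addition to what the paper writes, which only records the semi-norm bound.
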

\begin{proof}
Combining Lemma~\ref{little} with Theorem~\ref{solEst} shows that 
\begin{align*}
  \| \hat{u} \|_{H^1_s(\real^3)}
  & = \frac{|s|^2}{c_0^2} \| \hat{V}(q_c(\hat{u} + \hat{u}^i)) \|_{H^1_s(\real^3)} \\
  & \leq \frac{|s|^2}{c_0^2} \frac{q_{c,+}^{1/2}}{\sigma_0} \| \hat{u} + \hat{u}^i \|_{L^2_{q_c}(D)}
  \leq \frac{|s|^2}{c_0^2} \frac{q_{c,+}^{1/2}}{\sigma_0} \left( 1 + \frac{q_{c,+}}{\sigma^2 c_0^2}|s|^2 \right)\| \hat{u}^i \|_{L^2_{q_c}(D)}. 
\end{align*}
Further, using (\ref{VL4})
\begin{eqnarray*}
  |u|_{H^2(D)} &
  =& \frac{|s|^2}{c_0^2}  | \hat{V}(q_c(\hat{u} + \hat{u}^i))|_{H^2(\R^3)}
  \leq C \frac{|s|^3}{\sigma} \| \hat{u} + \hat{u}^i \|_{L_{q_c}^2(D)}\\
&  \leq &C |s|^5 \| \hat{u}^i \|_{L_{q_c}^2(D)}.
\end{eqnarray*}
\end{proof}

Using Lubich's theory, the first estimate in Theorem~\ref{solEst} shows that semi-discretization in time via convolution quadrature results in
an optimally convergent semi-discrete method (discrete in time) with no time step restrictions. To state this
result we adopt Lubich's notation. Define $\hat{A}(s): \, L_{q_c}^2(D)\to L_{q_c}^2(D)$ by 
\[
\hat{A}(s)=-\left(I+(s^2/c_0^2) \hat{V}_{q_c} \right)^{-1}(s^2/c_0^2)\hat{V}_{q_c}. 
\]
Then denote the corresponding time domain solution operator (obtained by the inverse Fourier-Laplace transform) by $\hat{A}(\partial_t)$, note that Lemma~\ref{little} and Theorem~\ref{solEst} state that  
\[
\hat{A}(\partial_t):\,H^{p+2}_\sigma(\real_+;L_{q_c}^2(D))\to H^{p}_\sigma(\real_+;L_{q_c}^2(D))
\]
is bounded.  Then we denote by $\hat{A}(\partial_t^{\Delta t})$ the convolution quadrature semi-discrete in time
solution operator.  Using this notation, the exact solution $u^s$ is given by
\[
u^s=\hat{A}(\partial_t)(u^i)
\]
and at each time step the semi-discrete solution denoted $u^{s,\Delta t}$ is given by
\[
u_n^{s,\Delta t}=\hat{A}(\partial_t^{\Delta t})(u^i)(n\Delta t), \  n=0,1,\dots \,.
\]
The following estimate holds up to a fixed final time $T>0$, where now $\Delta t=T/M$ for some $M>0$.
\begin{theorem} \label{semid} Suppose the multistep method (\ref{multi}) is A-stable and of order $p$, and $\delta(\zeta)$ has no poles on the unit circle in the complex plane.  Let $u^i\in H^r_\sigma(\real_+,L_{q_c}^2(D))$ with $r=p+3$ then
\[
\left(\Delta t \sum_{n=0}^M\Vert u^s(t_n)-u^{s,\Delta t}_n\Vert_{L_{q_c}^2(D)}^2\right)^{1/2}\leq C (\Delta t)^p\Vert u^{i}\Vert_{H^r_{\sigma_0}(\real_+,L_{q_c}^2(D))}.
\]
Here $C$ is independent of $\Delta t$ and $u^i$ but depends on $p$ and $\sigma_0$.
\end{theorem}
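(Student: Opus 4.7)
The plan is to reduce the statement to a direct application of Lubich's convergence theorem for convolution quadrature from \cite{lubich}, treating $\hat{A}(s)$ as the transfer operator whose Laplace-domain bound from Theorem~\ref{solEst} supplies the polynomial-growth hypothesis.

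First I would make use of the framework already introduced above the statement: the solution operator $\hat{A}(s) = -(I+(s^2/c_0^2)\hat{V}_{q_c})^{-1}(s^2/c_0^2)\hat{V}_{q_c}$ maps $L_{q_c}^2(D)$ to itself, is analytic on $\Re(s)>0$ by Theorem~\ref{solEst}, and produces the continuous solution $u^s = \hat{A}(\partial_t)u^i$ and, after discretization, $u_n^{s,\Delta t} = \hat{A}(\partial_t^{\Delta t})(u^i)(n\Delta t)$. The quantitative input is bound (\ref{solBound2}), which on the half-plane $\Re(s)\geq \sigma_0 > 0$ gives
\[
\|\hat{A}(s)\|_{L_{q_c}^2(D)\to L_{q_c}^2(D)} \leq C_0\,|s|^2,
\]
with $C_0$ depending only on $\sigma_0$, $q_{c,+}$ and $c_0$. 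This is the standard polynomial-growth assumption $\|\hat{A}(s)\|\leq M|s|^\mu$ of Lubich's framework with $\mu=2$.

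Next I would invoke Lubich's convergence theorem for convolution quadrature based on A-stable linear multistep methods \cite{lubich}: if the method is A-stable of order $p$ and $\delta(\zeta)$ has no poles on the unit circle, and $\hat{A}(s)$ is analytic on $\Re(s)>\sigma_0$ with $\|\hat{A}(s)\|_{X\to Y}\leq M|s|^\mu$, then for any causal $g \in H^{p+\mu+1}_{\sigma_0}(\real_+;X)$ the semi-discrete error admits an $\ell^2$-in-time bound of the form
\[
\Bigl(\Delta t \sum_{n=0}^{M}\|\hat{A}(\partial_t)(g)(t_n) - \hat{A}(\partial_t^{\Delta t})(g)(t_n)\|_Y^2\Bigr)^{1/2} \leq C(\Delta t)^p \|g\|_{H^{p+\mu+1}_{\sigma_0}(\real_+;X)}.
\]
Specializing to $X=Y=L_{q_c}^2(D)$, $g=u^i$, and $\mu=2$ makes the regularity index $p+\mu+1=p+3$ match the hypothesis $r=p+3$ exactly, and yields the claimed estimate with a constant $C$ depending on $p$ and $\sigma_0$ (through $C_0$ and the CQ convergence constants for $\delta$).

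The main obstacle has in fact already been resolved upstream: the non-trivial analytic work was Theorem~\ref{solEst}, which established both analyticity of $(I+(s^2/c_0^2)\hat{V}_{q_c})^{-1}$ on the open right half-plane and a growth bound in $|s|$ with only a $1/\sigma^2$ blow-up as $\sigma \downarrow 0$. Given that input, the remaining work is essentially bookkeeping: translating our notation into the transfer-function form used in \cite{lubich}, verifying that A-stability and the condition on $\delta$ align with the hypotheses stated there, and extracting the $\ell^2$-in-time version of the error estimate from Lubich's Parseval-on-the-unit-circle argument. No further PDE-level estimate is required beyond what has already been proved.
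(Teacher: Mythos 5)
Your proposal is correct and is essentially the paper's own argument: the paper proves Theorem \ref{semid} simply by invoking Theorem 3.3 of \cite{lubich}, with the analyticity and the $|s|^2$ bound from Theorem \ref{solEst} supplying the hypotheses, exactly as you spell out (including the bookkeeping $r=p+\mu+1=p+3$ with $\mu=2$).
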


\begin{remark} \begin{enumerate} \item Since Theorem \ref{solEst} holds for conforming Galerkin methods based on (\ref{var}) and conforming finite element subspaces of $L_{q_c}^2(D)$ we could also prove a fully discrete error estimate without stability restrictions
(c.f. Theorem 5.4 of \cite{lubich}).  However to simplify implementation of the fully discrete scheme and provide fast operator evaluation, we  instead will follow a different approach.  We will analyze a collocation scheme based on periodization of the integral equation which diagonalizes the integral operator $\hat{V}$.  This implies that $\hat{V}$ can be applied rapidly but also introduces an extra spatial approximation.

\item Estimates in other norms could also be proved (see \cite{lubich}).
\end{enumerate}\end{remark}

\begin{proof} We apply Theorem 3.3 of \cite{lubich}.\end{proof}

\section{Collocation Discretization for the Lippmann-Schwinger Equation at Complex Frequency}\label{trig}
To obtain a fast solver for the Fourier-Laplace domain Lippmann-Schwinger integral equation~\eqref{ushateq} we use a trigonometric collocation approach as in \cite{vain00}. This involves periodization of the volume integral operator $\hat{V}$ that appears in~\eqref{ushateq}. Since we want to combine this collocation discretization in space with convolution quadrature in time, all estimates for the spatial discretization have to be explicit in terms of the complex frequency $s$.  

Extending the contrast function $q_c$ by zero to all of $\real^3$, we assume that 
\begin{equation} \label{eq:supAss}
  \overline{D} = \mathrm{supp}\,(q_c ) \subset \{ x \in \real^3, \, |x| \leq \rho \} \quad \text{for some }\rho > 0  
\end{equation}
and consider again the frequency-domain integral equation from~\eqref{ushateq},  
\begin{equation} \label{IEFreq}
  \hat{u}^s+ \frac{s^2}{c_0^2} \hat{V}(q_c \hat{u}^s) = -\frac{s^2}{c_0^2}\hat{V}(q_c\hat{u}^i) 
  \quad \text{in } L^2(D). 
\end{equation}
Due to~\eqref{eq:supAss}, we can extend the restriction of $q_c$ to 
\[
  G_{2\rho} = \{ x \in \real^3, \, -2\rho < x_j \leq 2\rho , \, j=1,2,3\}, 
\]
as a $4\rho$-periodic function in each direction in space that we call $q_{c,\p}$. The extension $q_{c,\p}$ obviously is as smooth as $q_c$. Below, we interpret $q_{c,\p}$ as a function in a periodic function space defined on $G_{2\rho}$. We will also periodize the integral operator  $\hat{V}$ from (\ref{Vhatdef}). To this end, we define a $4\rho$-periodic kernel by extending the kernel function 
\[
  \kappa_{\p,s}(x) = 
  \begin{cases}
    \frac{\exp(-s/c_0 \, |x|)}{4\pi |x|} & x \in G_{2\rho}, \, 0< |x| < 2\rho, \\
    0 & x \in G_{2\rho}, \, 2\rho \leq |x|, 
  \end{cases}
\]
$4\rho$-periodically in each coordinate direction of space to a function on $\real^3$ defined almost everywhere. 
The associated $4\rho$-periodic integral operator is
\[
  \hat{V}_\p(f) = \int_{G_{2\rho}} \kappa_{\p,s}(\cdot - y) f(y) \, dV(y).
\]
Later on, we will prove mapping properties of $\hat{V}_\p$ in periodic Sobolev spaces. We further replace $\hat{u}^i$ on the right-hand side of~\eqref{IEFreq} by its extension by zero to $G_{2\rho}$ denoted $\hat{u}^i_{\p} \in L^2(G_{2,\rho})$. As in~\cite{vain00, Saran2002} we note that~\eqref{IEFreq} is equivalent to the following periodic integral equation for the unknown $\hat{u}_\p \in L^2(G_{2\rho})$,
\begin{equation} \label{IEFreqPer}
  \hat{u}_\p + \frac{s^2}{c_0^2} \hat{V}_\p(q_{c,\p} \hat{u}_\p) = \hat{V}_\p(q_{c,\p} \hat{u}^i_\p) 
  \qquad \text{in } L^2(G_{2\rho}). 
\end{equation}
Note that $q_{c,\p} \hat{u}^i_\p$ vanishes outside $D$; thus, as in~\eqref{IEFreq}, the values of $\hat{u}^i_\p$ in $G_{2\rho} \setminus \overline{D}$ play no role. 
The equivalence of~\eqref{IEFreqPer} and~\eqref{IEFreq} is due to the fact that for $x, y \in D$ it holds that $|x-y| \leq |x| + |y| < 2\rho$, thus $\kappa_{\p,s}(x - y) = \exp(-s/c_0 \, |x-y|) / (4\pi |x-y|)$ for $x \not = y$ and
\begin{eqnarray} 
  \hat{V}_\p(q_{c,\p} f)
  &=& \int_{G_{2\rho}} \kappa_{\p,s}(\cdot\, - y) q_{c,\p}(y) f(y) \, dV
  = \int_{D} \frac{e^{-s/c_0 \, |\cdot-y|}}{4\pi |\,\cdot \, -y|} q_{c,\p}(y) f(y) \, dV \nonumber\\
  &=& \hat{V} ( q_c f ), \quad x \in D. \label{eq:aux6}
\end{eqnarray}
Hence, if $\hat{v}$ solves~\eqref{IEFreq}, then $\hat{V}_\p(q_{c,\p} (\hat{u}^i - (s^2/c_0^2) \, \hat{v}_\p ))$ defines a solution to the periodic integral equation, and if $\hat{v}_\p$ solves the periodic equation, then $\hat{V}(q_c (\hat{u}^i - (s^2/c_0^2) \, \hat{v}_\p ))$ defines a solution to~\eqref{IEFreq}. (By abuse of notation, we did not explicitly write down the necessary extensions by zero from $D$ to $G_{2\rho}$ and restrictions from $G_{2\rho}$ to $D$.) 
%

Next, we recall well known facts about trigonometric functions and associated spaces and operators. All concepts are explained in more detail in, e.g.,~\cite{Saran2002}, together with proofs for the corresponding one- and two-dimensional results (the proofs in three dimensions are analogous with obvious modifications due to the additional spatial dimension).
Using trigonometric monomials
\[
  \varphi_j(x) := \frac{1}{(4\rho)^{-3/2}} e^{\frac{\i\pi}{2\rho} \, j \cdot x}, \qquad j \in \Z^3, x \in \R^3, 
\]
we define, for $N \in \N$, the finite-dimensional subspace of trigonometric polynomials 
\[
  T_N := \mathrm{span} \big\{ \varphi_j,  j\in \Z^3_N \big\} \subset L^2(G_{2\rho}) 
  \]
  where \[
  \Z^3_N := \{ \ell\in\Z^3,  -N/2 \leq \ell_j < N/2,  j=1,2,3 \}. 
\]
Further, recall the definition of the Fourier coefficients of a $4\rho$-periodic distribution $v \in \mathcal{D}'_\p(\R^3)$, 
\[
  \dhat{v}(j) := (v, \varphi_{-j})_{\mathcal{D}'_\p \times \mathcal{D}_\p} 
  \quad 
  \left[ = \int_{G_{2\rho}} v(x) \overline{\varphi_j(x)} \, dV \text{ if } v \in L^2(G_{2\rho}) \right],  
\]
and the periodic Sobolev spaces $H^{t}$ for $t\in\R$, defined by 
\[
  H^{t} := \bigg\{ v \in \mathcal{D}'_\p, \, \sum_{j\in\Z^3} (1+|j|^2)^{t} \,  | \dhat{v}(j) |^2 <\infty \bigg\}
  \quad \text{with norm}\quad
  \| v \|_{H^t}^2 = \sum_{j\in\Z^3} (1+|j|^2)^{t} \,  | \dhat{v}(j) |^2.
\]
We recall that functions in $H^t$ with $t>3/2$ are continuous, due the Sobolev embedding theorem, and that $H^r$ is a Banach algebra for $t > 3/2$, that is, $\| uv\|_{H^t} \leq C(t) \| u\|_{H^t} \| v\|_{H^t}$. 

Following~\cite{Saran2002}, we also define $Q_N$ to be the interpolation operator that maps continuous, $4\rho$-periodic functions in $\R^3$ to their interpolation polynomial in $T_N$, defined by $Q_N(v) = v_N$ where $v_N \in T_N$ satisfies $v_N(x_j) = v(x_j)$ for the interpolation points  
\[
  x_j := jh, \quad j\in \Z^3_N , \quad h = \frac{4\rho}{N}. 
\]
As $N\to\infty$, the interpolation projection $Q_N v$ approximates $v \in H^t$ if $t>3/2$, since the error estimate 
\begin{equation} \label{eq:errQ}
  \| [I- Q_N] f\|_{H^r} \leq C(\ell,t) N^{r-t}\| f \|_{H^t}
\end{equation}
holds for for $0 \leq r \leq t$ whenever $f \in H^t$ with $t>3/2$. 

Note that a function $v_N \in T_N$ can either be characterized by its Fourier coefficients 
\[
  \dhat{v}_N = \{ \dhat{v}(j) \}_{j\in\Z^3_N} \in \C^3_N = \{ c(j) \in \C, \, j \in \Z^3_N \}
\]
or its values at the nodal points $x_j$, that we abbreviate as $\underline{v}_N  = \{ v_N(x_j)\}_{j\in \Z^3_N} \in \C^3_N$. (By abuse of notation, we use the same notation for point evaluation at the nodal points of any continuous function.) It is well known that the three-dimensional discrete Fourier transform $\mathcal{F}_N$ is an isometry on $\C^3_N$ mapping the point values $\underline{v}_N \in \C^3_N$ of $v_N \in T_N$ to the Fourier coefficients $\dhat{v}_N \in \C^3_N$. 

Spaces of trigonometric polynomials are attractive for discretizing the integral operator $\hat{V}_\p$, since this periodic integral operator diagonalizes on trigonometric monomials, 
\begin{align*}
  (\hat{V}_\p (\varphi_j))(x) 
  & = \frac{1}{(4\rho)^{3/2}} \int_{G_{2\rho}} \kappa_{s,\p}(x-y)  e^{\frac{\i\pi}{2\rho} \, j \cdot (y-x)} dV(y) \, e^{\frac{\i\pi}{2\rho} \, j \cdot x} \\
  & = \int_{G_{2\rho}} \kappa_{s,\p}(z)  e^{- \frac{\i\pi}{2\rho} \, j \cdot z} dV(z) \, \varphi_j(x) 
  = (4\rho)^{3/2} \dhat\kappa_{s,\p}(j)  \varphi_j(x), \quad j \in \Z^3, x \in G_{2\rho}.
\end{align*}
Setting $R := 2\rho \i s / c_0$ for $s = \sigma + \i \eta$, the Fourier coefficients $\dhat\kappa_{s,\p}(j)$ of the periodic kernel $\kappa_{s,\p}$ can be explicitly computed as 
\begin{align}
  & \dhat\kappa_{s,\p}(j) 
  = \begin{cases}
    \frac{R^2}{\pi^2 |j|^2 - R^2} \left[ 1 - \exp(\i R) \left( \cos(\pi |j|)-\frac{\i R}{\pi |j|} \sin(\pi |j|) \right) \right] & j \not = 0, \\
    \exp(\i R) (1-\i R)-1 & j=0,
  \end{cases}
  \label{keFoCo1}  \\
  & = 
  \begin{cases}
    \frac{4 \rho^2}{c_0^2} \frac{\eta^2-\sigma^2-2\i \eta\sigma}{\pi^2 |j|^2 - 4 \rho^2(\eta^2-\sigma^2-2\i \eta\sigma)/c_0^2} \left[ 1 - e^{-\frac{2\rho}{c_0}(\sigma+\i \eta)} \left( \cos(\pi |j|)+\frac{2\rho (\sigma+\i \eta)}{c_0\pi |j|} \sin(\pi |j|) \right) \right] & j \not = 0, \\
    \exp[-2\rho(\sigma+\i \eta)/c_0] (1+2\rho(\sigma+\i \eta)/c_0)-1 & j=0.
  \end{cases}
  \label{keFoCo} 
\end{align}
Since we will always assume that $\Re(s) = \sigma>0$, the $j$th Fourier coefficient $\dhat\kappa_{\p,s}(j)$ is always well defined (the denominator in~\eqref{keFoCo} cannot vanish) and from~\eqref{keFoCo1} one observes that each of the coefficients is analytic in $s$ as long as $\Re(s)=\sigma>0$. We next establish estimates for the Fourier coefficients $\dhat\kappa_{s,\p}$ that will allow us to prove mapping properties of $\hat{V}_\p$ in the sequel. Writing again $s = \sigma + \i \eta$ and assuming that $\sigma > \sigma_0>0$, we find that 
$|\dhat\kappa_{s,\p}(0)| \leq (1+\exp[-2\rho\sigma_0/c_0] [1+2\rho|s|/c_0 ] )$ and 
\begin{align*}
 |\dhat\kappa_{s,\p}(j)| 
  & = 4 \rho^2 \left| \frac{\eta^2-\sigma^2-2\i \eta\sigma}{c_0^2 \pi^2 |j|^2 - 4 \rho^2(\eta^2-\sigma^2-2\i \eta\sigma)} \right| \\&\qquad\qquad\left| 1 - e^{-\frac{2\rho}{c_0}(\sigma+\i \eta)} \left( \cos(\pi |j|)+\frac{2\rho (\sigma+\i \eta)}{c_0\pi |j|} \sin(\pi |j|) \right) \right| \\
  & \leq \left| \frac{|s|^2}{c_0^2 \pi^2 |j|^2 /4 \rho^2 - (\eta^2-\sigma^2-2\i \eta\sigma)} \right| \left[ 1 +  e^{-\frac{2\rho\sigma}{c_0}} + e^{-\frac{2\rho\sigma}{c_0}} \frac{2\rho \sigma}{c_0\pi |j|} + e^{-\frac{2\rho\sigma}{c_0}} \frac{2\rho |\eta|}{c_0\pi |j|}\right]\\
  & \leq \left| \frac{|s|^2}{c_0^2 \pi^2 |j|^2 /4 \rho^2 - (\eta^2-\sigma_0^2-2\i \eta\sigma_0)} \right| \left[ 2 + \frac{1}{e \pi |j|} + e^{-\frac{2\rho\sigma_0}{c_0}} \frac{2\rho |\eta|}{c_0\pi |j|}\right], \qquad j \not = 0.
\end{align*}
Since $\eta \mapsto (\eta^2-\sigma_0^2-2\i \eta\sigma_0)$ describes a parabola in the complex plane intersecting the real axis orthogonally at $-\sigma_0^2$, the complex axis at $\pm 2 \i \sigma_0^2$ and with real part  tending to $+\infty$ as $\eta\to \pm\infty$, the distance between this parabola and the point $c_0^2 \pi^2 |j|^2 /4 \rho^2 >0$ on the positive real axis is strictly positive, 
$| c_0^2 \pi^2 |j|^2 /4 \rho^2 - (\eta^2-\sigma^2-2\i \eta\sigma)| \geq \sigma_0^2>0$, that is, $|\dhat\kappa_{s,\p}(j)| \leq C |s|^2|\eta| \leq C |s|^3$.
The following lemma sharpens this estimate. 

\begin{lemma}\label{lemma7}
For $s \in \C$ with $\Re s = \sigma > \sigma_0>0$ there is $C=C(\sigma_0) > 0$ independent of $s$ with
\begin{equation} 
  \label{eq:boundKappa}
  0< |\dhat\kappa_{s,\p}(j)| 
  \leq C \frac{|s|^2}{1+|j|} \quad \text{for all } j\in \Z^3.
\end{equation}
In consequence, $\hat{V}_\p$ is bounded from $H^t$ into $H^{t+1}$ for all $t\in\R$ and 
$\| \hat{V}_\p \|_{H^t \to H^{t+1}} \leq C |s|^2$ with $C$ independent of $t\in\R$. 
The integral operator $\hat{V}_\p$ is analytic in $s$ for all $s \in \C$ with 
$\Re s = \sigma > \sigma_0>0$.  
\end{lemma}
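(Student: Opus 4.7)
The plan is in three steps. First, I will upgrade the pointwise estimate on the Fourier coefficient $\dhat\kappa_{s,\p}(j)$ from the $|s|^3$ bound already derived in the text to the claimed $|s|^2/(1+|j|)$ decay. Second, since $\hat{V}_\p$ is diagonal in the trigonometric monomial basis, the $H^t \to H^{t+1}$ mapping property with norm $O(|s|^2)$ will follow directly. Third, analyticity of $s \mapsto \hat{V}_\p$ will be inherited from the analyticity of each Fourier multiplier via the uniform operator bound on compact subsets of $\{\Re s > \sigma_0\}$.

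The main obstacle is sharpening the pointwise bound. The key algebraic identity, obtained by expanding $(\pi^2|j|^2 + \alpha s^2)\overline{(\pi^2|j|^2 + \alpha s^2)}$ with $\alpha := 4\rho^2/c_0^2$ and using $s^2 + \overline{s}^2 = 2(\sigma^2 - \eta^2)$, is
\[
  \bigl|\pi^2|j|^2 + \alpha s^2\bigr|^2 \;=\; 4\alpha\sigma^2\pi^2|j|^2 \;+\; \bigl(\alpha|s|^2 - \pi^2|j|^2\bigr)^2.
\]
This immediately yields the twin lower bounds $|\pi^2|j|^2 - R^2| \geq 2\sqrt{\alpha}\,\sigma_0\pi|j|$ and $|\pi^2|j|^2 - R^2| \geq \bigl|\alpha|s|^2 - \pi^2|j|^2\bigr|$. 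Splitting $j \neq 0$ into the three regimes $\pi^2|j|^2 \geq 2\alpha|s|^2$, $\pi^2|j|^2 \leq \alpha|s|^2/2$, and the intermediate band $|j| \sim |s|$, and using $|s| \geq \sigma_0$, I obtain the improved lower bound
\[
  |\pi^2|j|^2 - R^2| \;\geq\; C(\sigma_0)\,(|j| + |s|), \qquad j \in \Z^3 \setminus \{0\}, \quad \Re s > \sigma_0.
\]
For the bracketed factor in~\eqref{keFoCo1}, the bounds $|e^{\i R}| = e^{-2\rho\sigma/c_0} \leq 1$ and $|\sin(\pi|j|)| \leq 1$ give the elementary estimate $2 + C|s|/|j| \leq C(|j|+|s|)/|j|$ valid for $|j| \geq 1$. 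Multiplying by $|R|^2 = \alpha|s|^2$ and dividing by the denominator estimate, the $(|j|+|s|)$ factors cancel to yield $|\dhat\kappa_{s,\p}(j)| \leq C(\sigma_0)|s|^2/|j| \leq 2C(\sigma_0)|s|^2/(1+|j|)$. The zero mode is handled separately using the closed form $\dhat\kappa_{s,\p}(0) = e^{-2\rho s/c_0}(1 + 2\rho s/c_0) - 1$, which is bounded by $C(1+|s|) \leq C(\sigma_0)|s|^2$ because $|s| \geq \sigma_0$. The strict positivity $|\dhat\kappa_{s,\p}(j)| > 0$ is a short direct inspection of~\eqref{keFoCo1}.

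Given the pointwise bound, the remaining conclusions are routine. Since $\hat{V}_\p \varphi_j = (4\rho)^{3/2}\dhat\kappa_{s,\p}(j)\,\varphi_j$ and $(1+|j|^2)/(1+|j|)^2 \leq 1$ for all $j \in \Z^3$, orthogonality in $L^2(G_{2\rho})$ gives
\[
  \|\hat{V}_\p f\|_{H^{t+1}}^2 \;=\; \sum_{j \in \Z^3} (1+|j|^2)^{t+1}\bigl|(4\rho)^{3/2}\dhat\kappa_{s,\p}(j)\bigr|^2 |\dhat f(j)|^2 \;\leq\; C\,|s|^4 \sum_{j \in \Z^3}(1+|j|^2)^t\,|\dhat f(j)|^2,
\]
which produces the claimed operator-norm bound with constant independent of $t \in \R$. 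Finally, analyticity of $s \mapsto \hat{V}_\p$ as a map from $H^t$ into $H^{t+1}$ follows because each Fourier multiplier $\dhat\kappa_{s,\p}(j)$ is analytic in $s$ on $\{\Re s > 0\}$ by~\eqref{keFoCo1}; the uniform operator bound just obtained permits termwise differentiation of the Fourier series on any compact subset of $\{\Re s > \sigma_0\}$, yielding operator-norm analyticity.
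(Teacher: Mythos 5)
Your treatment of the substantive claims (the multiplier bound, the $H^t\to H^{t+1}$ estimate with constant independent of $t$, and the analyticity) is correct, and the overall strategy is the same as the paper's: estimate the Fourier coefficients of the periodized kernel, exploit that $\hat{V}_\p$ is diagonal on trigonometric monomials, and inherit analyticity from the coefficients together with local uniform bounds. Where you differ is in the bookkeeping of the key pointwise estimate. The paper bounds the denominator from below by computing the distance from the point $c_0^2\pi^2|j|^2/4\rho^2$ to the parabola $\eta\mapsto\eta^2-\sigma^2-2\i\eta\sigma$ (a lower bound proportional to $\sigma|j|$) and separately by its imaginary part $2|\eta|\sigma$, combines these into a product-type bound of the form $c(\sigma_0)(1+|j|)(1+|\eta|)$, and then uses the extra $(1+|\eta|)$ factor to absorb the $|\eta|/|j|$ term in the bracketed factor. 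You encode the same two pieces of information in the exact identity $|\pi^2|j|^2+\alpha s^2|^2=4\alpha\sigma^2\pi^2|j|^2+(\alpha|s|^2-\pi^2|j|^2)^2$ (which I checked), extract the additive lower bound $c(\sigma_0)(|j|+|s|)$ by a three-regime case split, and cancel it against the bracket bound $C(|j|+|s|)/|j|$. Both routes yield the stated $C(\sigma_0)|s|^2/(1+|j|)$; yours replaces the calculus minimization by an algebraic identity and makes the $\sigma_0$-dependence transparent, while the paper's product-form denominator bound is what lets it keep its bracket uniformly bounded. This is a legitimate, slightly cleaner variant of the same argument rather than a genuinely different method.

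One caveat: the strict positivity $0<|\dhat\kappa_{s,\p}(j)|$ is not ``a short direct inspection,'' and as a statement about \emph{all} $s$ with $\Re s>\sigma_0$ it is actually problematic. For $j=0$, writing $z=2\rho s/c_0$, one has $\dhat\kappa_{s,\p}(0)=e^{-z}(1+z)-1$, which vanishes precisely when $e^z=1+z$; this equation has infinitely many complex roots with $\Re z\approx\log|\Im z|\to\infty$, hence roots with $\Re s>\sigma_0$ for every $\sigma_0>0$, so the coefficient does vanish at isolated admissible frequencies (and a similar question for $j\neq 0$ is not addressed by inspection either). The paper's own proof is equally silent on this point, and nothing downstream uses the nonvanishing (the later invertibility arguments go through Fredholm theory and Theorem \ref{solEst}), so this is a defect of the lemma's statement shared with the paper rather than a failure of your estimates — but you should not assert that it is obvious.
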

\begin{remark}\label{remark1}
  If one fixes $s$ or restricts $s$ to any bounded subset $M$ of $\{ s \in \C, \, \Re(s) > \sigma_0 > 0\}$, 
  then $|\dhat\kappa_{s,\p}(j)| \leq C_M /(1+|j|^2)$.
\end{remark}
\begin{proof}
For $|j|>0$ one computes that the minimal squared distance between the point $c_0^2 \pi^2 |j|^2 /4 \rho^2$ 
and the curve $\eta \mapsto \eta^2-\sigma^2-2\i \eta\sigma$ in the complex plane is attained for 
$\eta^2 = c_0^2 \pi^2 |j|^2 /4 \rho^2 - \sigma^2$ and equals $(\sigma  c_0 \pi \rho |j|)^2$ for $\sigma>0$. 
Moreover, 
$| c_0^2 \pi^2 |j|^2 /4 \rho^2 - (\eta^2-\sigma^2-2\i \eta\sigma) | \geq |\Im [\dots]| = 2 |\eta|\sigma$. 
Hence,
\[
  \left| \frac{|s|^2}{c_0^2 \pi^2 |j|^2 /4 \rho^2 - (\eta^2-\sigma^2-2\i \eta\sigma)} \right|
  \leq C \frac{|s|^2}{(1 + |j|)(1+|\eta|)}
\]
and 
\begin{align*}
  |\dhat\kappa_{s,\p}(j)| 
  & \leq C \frac{|s|^2}{1 + |j|} \left[ 2 + \frac{1}{e \pi |j|} + \frac{2\rho}{c_0\pi |j|} \frac{|\eta|}{1+|\eta|}\right] 
\end{align*}
for $j \not = 0$. For $j=0$ the claimed estimate is obvious. 
\end{proof}

Now we are ready to introduce the collocation discretization for the periodized integral equation~\eqref{IEFreqPer}. 
Assuming that $q_{c,\p} \in H^r$ for some $r>3/2$, we seek $\hat{u}_{\p,N} \in T_N$ that solves
\begin{equation} \label{IEColl}
  \hat{u}_{\p,N} + \frac{s^2}{c_0^2} \hat{V}_\p (Q_N[q_{c,\p}\hat{u}_{\p,N}])
  = \frac{s^2}{c_0^2} \hat{V}_\p (Q_N(q_{c,\p} \hat{u}^i_\p))
  \qquad \text{in } T_N. 
\end{equation}
Since $\hat{V}_\p$ diagonalizes on the trigonometric space $T_N$, an equivalent fully discrete formulation in terms of the Fourier coefficients $\dhat{u}_N$ of $\hat{u}_{\p,N}$ reads
\[
  \dhat{u}_{\p,N} + \frac{s^2}{c_0^2} \dhat\kappa_{s,\p} \bullet \mathcal{F}_N \left[ \underline{q_{c,\p}}_N \bullet \mathcal{F}_N^{-1} \dhat{u}_{\p,N} \right]
  = \frac{s^2}{c_0^2} \dhat\kappa_{\p,s} \bullet \mathcal{F}_N \left[ \underline{q_{c,\p}}_N \bullet \underline{\hat{u}^i_\p}_N \right] 
  \quad \text{in } \C^3_N
\]
where $\bullet$ denotes the element-wise multiplication of two vectors in $\C^3_N$. 
Obviously, this collocation discretization requires that the smallest $N^3$ Fourier coefficients of the discrete solution satisfy the Fourier transformed continuous equation rather than that the discrete solution itself satisfies the continuous equation at the collocation points. 

Due to the smoothing properties of $\hat{V}$ the discretized operator is close to the original one.

\begin{lemma} \label{th:OpApprox}
If $q_{c,\p} \in H^r$ with $r>3/2$, then there is $N_0 \in \N$ and $C>0$ independent of $s$ with $\Re(s) > \sigma_0>0$ such that  
\[
  \| \hat{V}_\p Q_N f - \hat{V}_\p Q_N  (q_{c,\p}f) \|_{H^2} 
  \leq C |s|^2 N^{-\min(1,r-1)} \| q_{c,\p} \|_{H^r} \| f \|_{H^2} 
  \quad \text{for all } f\in H^2. 
\]
\end{lemma}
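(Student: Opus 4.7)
The statement compares the operator $f \mapsto \hat{V}_\p(q_{c,\p}f)$ with its discretized version $f \mapsto \hat{V}_\p Q_N(q_{c,\p}f)$, so the plan is to rewrite the difference as $\hat{V}_\p\,(I-Q_N)(q_{c,\p}f)$ and then control it by combining three ingredients already at our disposal: the $|s|^2$-bounded smoothing of $\hat{V}_\p$ from Lemma \ref{lemma7}, the interpolation estimate \eqref{eq:errQ}, and the Banach-algebra property of $H^t$ for $t>3/2$.

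First I would invoke Lemma \ref{lemma7} at the regularity index $t=1$ to get
\[
  \| \hat{V}_\p\,(I-Q_N)(q_{c,\p}f) \|_{H^2} \leq C|s|^2 \, \| (I-Q_N)(q_{c,\p}f) \|_{H^1},
\]
with the constant independent of $s$ and of $N$. This shifts the problem to an interpolation error in $H^1$, which is what \eqref{eq:errQ} is designed for.

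Next I would estimate $q_{c,\p}f$ in the strongest Sobolev norm that both the hypothesis $q_{c,\p}\in H^r$ ($r>3/2$) and the hypothesis $f\in H^2$ allow. The natural target is $H^{\min(r,2)}$: if $r\geq 2$, both factors belong to $H^2$, which is a Banach algebra, hence $\|q_{c,\p}f\|_{H^2}\leq C\|q_{c,\p}\|_{H^r}\|f\|_{H^2}$; if $3/2<r<2$, then $f\in H^2\subset H^r$ and the Banach algebra property of $H^r$ yields $\|q_{c,\p}f\|_{H^r}\leq C\|q_{c,\p}\|_{H^r}\|f\|_{H^r}\leq C\|q_{c,\p}\|_{H^r}\|f\|_{H^2}$. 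In either case $q_{c,\p}f\in H^{\min(r,2)}$ with the operator norm bound uniform in $f$.

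Finally I would apply \eqref{eq:errQ} with the source regularity $t=\min(r,2)>3/2$ and target regularity $1\leq t$ to obtain
\[
  \| (I-Q_N)(q_{c,\p}f) \|_{H^1} \leq C\,N^{1-\min(r,2)}\,\|q_{c,\p}f\|_{H^{\min(r,2)}} \leq C\,N^{-\min(1,r-1)}\,\|q_{c,\p}\|_{H^r}\|f\|_{H^2},
\]
where the identity $1-\min(r,2)=-\min(1,r-1)$ is exactly the case split that produces the stated exponent. Combining the three estimates gives the lemma. The only slightly delicate step is the product/algebra estimate in the low-regularity case $r$ just above $3/2$: one must resist the temptation to work in $H^2$ (which would require $r\geq 2$) and instead degrade the target norm to $H^r$, which is what forces the $\min(1,r-1)$ rate rather than the optimal $N^{-1}$ rate.
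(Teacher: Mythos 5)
Your argument is correct and is essentially identical to the paper's own proof: the same chain of estimates, namely the $|s|^2$-uniform bound $\|\hat{V}_\p\|_{H^1\to H^2}\leq C|s|^2$ from Lemma \ref{lemma7}, the interpolation error \eqref{eq:errQ} applied with target index $1$ and source index $\min(2,r)$, and the Banach-algebra bound $\|q_{c,\p}f\|_{H^{\min(2,r)}}\leq C\|q_{c,\p}\|_{H^r}\|f\|_{H^2}$, yielding the exponent $-\min(1,r-1)$. Note also that, like the paper's proof, you implicitly read the left-hand side as $\|\hat{V}_\p(q_{c,\p}f)-\hat{V}_\p Q_N(q_{c,\p}f)\|_{H^2}$ (the form actually used later in \eqref{eq:aux1}), which is the intended statement.
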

\begin{proof}
We exploit the error estimate~\eqref{eq:errQ} for the interpolation projection and the fact  that $H^r$ is a Banach algebra for $r > 3/2$ to estimate  
\begin{align*}
  \left\| \hat{V}_\p[ q_{c,\p} f ] - \hat{V}_\p Q_N  (q_{c,\p}f) \right\|_{H^2}
  & \leq C |s|^2 \big\| [I-Q_N] (q_{c,\p}f) \big\|_{H^1} \\
 & \leq C |s|^2 N^{-\min(1,r-1)} \| q_{c,\p}f \|_{H^{\min(2,r)}} \\
  & \leq C |s|^2 N^{-\min(1,r-1)} \| q_{c,\p} \|_{H^{\min(2,r)}} \| f \|_{H^{\min(2,r)}}\\
  & \leq C |s|^2 N^{-\min(1,r-1)} \| q_{c,\p} \|_{H^r} \| f \|_{H^2}.  
\end{align*}
\end{proof}

\begin{lemma}\label{difficult}
If $q_{c,\p} \hat{u}^i_\p \in H^r$ with $r>3/2$ and $\Re(s) > \sigma_0 >0$, then there is $N_0=N_0(|s|) \in \N$  such that for $N\geq N_0$ there is a unique solution $\hat{u}_{\p,N} \in T_N$ to~\eqref{IEColl}. 
If $q_{c,\p} \hat{u}^i_\p \in H^{1+k}$ with $k\in\N=\{1,2,\dots \}$ and $q_{c,\p} \in C^{2}(G_{2\rho}) \cap H^{1+k}$, then there is $C=C(q_c,\sigma_0,k)>0$ independent of $s$ with $\Re(s) > \sigma_0 >0$ such that
\begin{equation} \label{eq:errEstSpace}
  \| \hat{u}_{\p,N} - \hat{u}_\p \|_{H^2}
  \leq C |s|^{18+4(k-1)} N^{-k} \left[ \| q_{c,\p} \hat{u}^i_\p \|_{H^{1+k}} 
  + \| q_{c,\p} \|_{H^{1+k}}^{k-1} \| q_c \|_{C^2(G_{2\rho})} \| \hat{u}^i \|_{L^2_{q_c}(D)} \right]
\end{equation}
for $N \geq N_0$, where $\hat{u}_\p \in H^2$ solves the periodized Lippmann-Schwinger equation~\eqref{IEFreqPer}. 
A lower bound for $N_0(|s|)$ is $c|s|^{13}$ with $c=c(q,\sigma_0)$. 
\end{lemma}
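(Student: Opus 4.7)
The plan is to cast~\eqref{IEFreqPer} and~\eqref{IEColl} as operator equations $(I+K_s)\hat{u}_\p = g$ and $(I+K_{s,N})\hat{u}_{\p,N} = g_N$ on $H^2$, where $K_s f = (s^2/c_0^2)\hat{V}_\p(q_{c,\p}f)$, $K_{s,N} f = (s^2/c_0^2)\hat{V}_\p Q_N(q_{c,\p}f)$, $g = K_s\hat{u}^i_\p$, and $g_N = K_{s,N}\hat{u}^i_\p$. Existence of $\hat{u}_{\p,N}$ for large $N$ then follows by perturbing $(I+K_s)^{-1}$, and the error comes from the identity
\[
  \hat{u}_\p - \hat{u}_{\p,N} = (I+K_{s,N})^{-1}\bigl[(g-g_N) + (K_{s,N}-K_s)\hat{u}_\p\bigr].
\]

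My first step is to bound $(I+K_s)^{-1}$ on $H^2$. Theorem~\ref{solEst} furnishes $\|(I+K_s)^{-1}\|_{L^2 \to L^2} \lesssim |s|$; writing $v = f - K_s v$ and then using successively the smoothing $\hat{V}_\p\colon H^t\to H^{t+1}$ of norm $\le C|s|^2$ (Lemma~\ref{lemma7}) together with the fact that $q_{c,\p} \in C^2$ multiplies $H^1$ and $H^2$ continuously, I bootstrap from $L^2$ to $H^1$ to $H^2$ to conclude $\|(I+K_s)^{-1}\|_{H^2\to H^2} \le C|s|^9$. Combined with Lemma~\ref{th:OpApprox}, which gives $\|K_s-K_{s,N}\|_{H^2\to H^2} \le C|s|^4 N^{-1}$, a standard Neumann-series perturbation yields the invertibility of $I+K_{s,N}$ with $\|(I+K_{s,N})^{-1}\|_{H^2\to H^2}\le 2C|s|^9$ as soon as $N\ge N_0\asymp |s|^{13}$, which is precisely the lower bound claimed in the lemma.

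For the error estimate, both $\|g-g_N\|_{H^2}$ and $\|(K_s-K_{s,N})\hat{u}_\p\|_{H^2}$ rely on a sharpened variant of Lemma~\ref{th:OpApprox}: if $q_{c,\p} f\in H^{1+k}$ (which holds by the Banach-algebra property of $H^{1+k}$ whenever $f\in H^{1+k}$ and $q_{c,\p}\in H^{1+k}$), then~\eqref{eq:errQ} gives $\|[I-Q_N](q_{c,\p}f)\|_{H^1}\le CN^{-k}\|q_{c,\p}f\|_{H^{1+k}}$, and composing with $\hat{V}_\p\colon H^1\to H^2$ delivers the rate $N^{-k}$. Applied to $f=\hat{u}^i_\p$ this directly gives $\|g-g_N\|_{H^2}\le C|s|^4 N^{-k}\|q_{c,\p}\hat{u}^i_\p\|_{H^{1+k}}$; applied to $f=\hat{u}_\p$ it requires a bound on $\|\hat{u}_\p\|_{H^{1+k}}$, which I obtain by bootstrapping on $\hat{u}_\p = g - K_s\hat{u}_\p$ starting from $\|\hat{u}_\p\|_{H^2}\le C|s|^5\|\hat{u}^i\|_{L^2_{q_c}}$ (the corollary of Theorem~\ref{solEst}). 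Each step gains one derivative at cost $|s|^4$ (from $\hat{V}_\p$ and the prefactor $(s/c_0)^2$); the first $H^2\to H^3$ step charges $\|q_c\|_{C^2(G_{2\rho})}$ since $H^2(\R^3)$ is only borderline as a Banach algebra, while the remaining $k-2$ steps each charge $\|q_{c,\p}\|_{H^{1+k}}$. Multiplying the resulting bound on $\|q_{c,\p}\hat{u}_\p\|_{H^{1+k}}$ by $C|s|^4 N^{-k}$ and by $\|(I+K_{s,N})^{-1}\|_{H^2\to H^2}\le C|s|^9$ produces the claimed rate $|s|^{18+4(k-1)}N^{-k}$ with prefactor $\|q_{c,\p}\|_{H^{1+k}}^{k-1}\|q_c\|_{C^2(G_{2\rho})}\|\hat{u}^i\|_{L^2_{q_c}}$.

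The hardest part will be the bookkeeping in the regularity bootstrap: ensuring that exactly one factor $\|q_c\|_{C^2(G_{2\rho})}$ (from the initial $H^2\to H^3$ step) appears while the remaining $k-1$ multiplication steps each pick up $\|q_{c,\p}\|_{H^{1+k}}$, so that the final exponent of $\|q_{c,\p}\|_{H^{1+k}}$ comes out to $k-1$ rather than $k$. A secondary subtlety is that Lemma~\ref{th:OpApprox} as stated yields only $N^{-1}$, so I must revisit its short proof with the stronger input regularity $f\in H^{1+k}$ to extract the $N^{-k}$ rate used above.
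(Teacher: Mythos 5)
Your proposal is correct in substance and follows the paper's own strategy almost step for step: the same $H^2\to H^2$ bound $C|s|^9$ for the inverse of $I+(s^2/c_0^2)\hat{V}_\p(q_{c,\p}\,\cdot\,)$, obtained by feeding Theorem~\ref{solEst} (applied after restricting the periodized equation to $D$, via~\eqref{eq:aux6}) into two lifts through the smoothing bound of Lemma~\ref{lemma7}; the same Neumann-series perturbation against Lemma~\ref{th:OpApprox} giving invertibility of the collocation operator and the threshold $N_0\sim|s|^{13}$; and the same regularity bootstrap producing $|s|^{18+4(k-1)}$ with the factor $\|q_{c,\p}\|_{H^{1+k}}^{k-1}\|q_c\|_{C^2(G_{2\rho})}$. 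The one real difference is the error identity: you keep only $(I-Q_N)$ terms acting on $q_{c,\p}\hat{u}^i_\p$ and $q_{c,\p}\hat{u}_\p$, so you need $\hat{u}_\p\in H^{1+k}$, whereas the paper inserts the orthogonal projection $P_N$ (using that $\hat{V}_\p$ commutes with $P_N$) and therefore bootstraps $\hat{u}_\p$ up to $H^{2+k}$; your variant is marginally cleaner, costs one derivative less of regularity on $\hat{u}_\p$, and yields the same rate and powers of $|s|$.

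Two points need tightening. First, Theorem~\ref{solEst} does not directly furnish $\|(I+K_s)^{-1}\|_{L^2(G_{2\rho})\to L^2(G_{2\rho})}\leq C|s|$: what it gives, after restriction to $D$ and~\eqref{eq:aux6}, is the weighted bound $\|\hat{v}_\p|_D\|_{L^2_{q_c}(D)}\leq C|s|\,\|f\|$; the periodized kernel does not inherit the decaying $L^2$ bound of Lemma~\ref{little}, so a bound on the full periodic $L^2$ norm with exponent one is not available from what is proved, and starting from the naive full-norm estimate would inflate the exponents. Fortunately your bootstrap only consumes $\|q_{c,\p}\hat{v}_\p\|_{L^2}\leq q_{c,+}^{1/2}\|\hat{v}_\p|_D\|_{L^2_{q_c}(D)}$, so stating the base estimate in that (restricted, weighted) form preserves the $|s|^9$ count; as phrased, the $L^2\to L^2$ claim is unjustified. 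Second, you bound $(I+K_s)^{-1}$ on $H^2$ before knowing it exists: you still need compactness of $K_s$ on $H^2$ (smoothing by one order plus compact embedding) together with uniqueness, which follows by restricting the homogeneous periodized equation to $D$ and invoking Theorem~\ref{solEst}, so that Fredholm's alternative gives invertibility — the paper devotes a paragraph to exactly this. (Minor: $H^2$ in three dimensions \emph{is} a Banach algebra, not borderline; charging $\|q_c\|_{C^2(G_{2\rho})}$ at the base step is fine, but the reason is that the base bound~\eqref{eq:aux11} lives on $D$ for the non-periodized solution, not a failure of the algebra property.)
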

\begin{remark} 
  We note that the power of $|s|$ in~\eqref{eq:errEstSpace} reduces by two
  if one restricts $s$ to a bounded subset of $\{ s \in \C, \, \Re(s) > \sigma_0 > 0\}$ since the the remark after Lemma \ref{lemma7} indicates that estimate (\ref{eq:boundKappa}) is independent of $|s|$.
\end{remark}
\begin{proof}
Clearly, Lemma~\ref{th:OpApprox} implies that 
\begin{equation}\label{eq:aux1}
  \big\| \big[I + \frac{s^2}{c_0} \hat{V}_\p( q_{c,\p} \, \cdot ) \big] - \big[I + \frac{s^2}{c_0} \hat{V}_\p (Q_N (q_{c,\p} \, \cdot)) \big] \big\|_{H^2 \to H^2} 
  \leq C_A |s|^4 N^{-\min(1,1-r)} \| q_{c,\p} \|_{H^r}. 
\end{equation}
Recall that the continuous, non-periodized problem to find a solution $\hat{u} \in L_{q_c}^2(D)$ to the homogeneous equation~\eqref{IEFreq}, that is, $\hat{u} + (s^2/c_0^2) \, \hat{V}(q_{c,\p} \hat{u}) = 0$, possesses only the trivial solution due to Theorem~\ref{solEst}. Since any solution to the periodized problem~\eqref{IEFreqPer} yields a solution to~\eqref{IEFreq}, a solution to the homogeneous equation $\hat{u}_\p + (s^2 / c_0^2) \, \hat{V}(q_{c,\p} \hat{u}_\p) = 0$ must necessarily vanish in $D$, which directly implies that $\hat{u}_\p$ vanishes in $G_{2\rho}$. Hence, also equation~\eqref{IEFreqPer} possesses only the trivial solution for zero right-hand side. 

The operator $\hat{u}_\p \mapsto \hat{V}_\p(q_{c,\p} \hat{u})$ is bounded from $H^2$ into $H^{\min(2,r)+1}$ because multiplication by $q_{c,\p}$ is continuous from $H^2$ into $H^{\min(2,r)}$ and $\hat{V}_\p$ is bounded from $H^{\min(2,r)}$ into $H^{\min(2,r)+1}$. Since $\min(2,r)+1 > 5/2$, the embedding of the latter space into $H^2$ is compact, that is, $\hat{u}_\p \mapsto \hat{V}_\p(q_{c,\p} \hat{u}_\p)$ is compact on $H^2$. Thus, Fredholm's alternative implies that $I + (s^2 / c_0^2) \, \hat{V}_\p(q_{c,\p} \, \cdot)$ is invertible on $H^2$. 
By~\eqref{eq:aux1} and a Neumann series argument, this implies that $I + (s^2/c_0^2) \, \hat{V}_\p (Q_N (q_{c,\p} \, \cdot))$ is also invertible on $H^2$ if $N=N(s)$ is large enough, more precisely, if 
\begin{equation} \label{eq:cruxCo}
  C_A |s|^4 N^{-\min(1,1-r)} \| q_{c,\p} \|_{H^r} \bigg\| \bigg[ I + \frac{s^2}{c_0^2} \hat{V}_\p \big( q_{c,\p} \, \cdot \big) \bigg]^{-1} \bigg\|_{H^2 \to H^2}< 1
\end{equation}
To obtain an estimate for the operator norm of the inverse in the last condition, we suppose from now on that $q_{c,\p} \in C^2(G_{2\rho})$ (in particular, $r\geq 2$). 
Assume that $\hat{v}_\p \in H^2$ satisfies 
\begin{equation}
  \label{eq:aux3}
  \hat{v}_\p + \frac{s^2}{c_0^2} \hat{V}_\p \big[ q_{c,\p} \hat{v}_\p \big] = f \quad \text{in }H^2
  \qquad \text{for some $f\in H^2$.}
\end{equation}
Then using Lemma \ref{lemma7}
\begin{equation} \label{eq:aux13}
  \| \hat{v}_\p \|_{H^2} \leq \| f \|_{H^2} + \frac{|s|^2}{c_0^2} \| \hat{V}_\p [ q_{c,\p} \hat{v}_\p ] \|_{H^2} 
  \leq \| f \|_{H^2} + C |s|^4 \| q_{c,\p} \hat{v}_\p \|_{H^1}, 
\end{equation}
leading to 
\begin{align}
  \| \hat{v}_\p \|_{H^2} 
  & \leq \| f \|_{H^2} + C |s|^4 \| q_{c,\p} \|_{C^1(G_{2\rho})} \| \hat{v}_\p  \|_{H^1} \nonumber\\
  & \leq \| f \|_{H^2}+C |s|^6 \| q_{c,\p}\|_{C^1(G_{2\rho})} \big[\| \hat{V}_\p [ q_{c,\p} \hat{v}_\p ] \|_{H^1}+\| f \|_{H^1} \big] \nonumber\\
  & \leq C |s|^6 \| f \|_{H^2} + C |s|^8 \| q_{c,\p} \|_{C^1(G_{2\rho})} q_{c,+}^{1/2} \, \| \hat{v}_\p|_D \|_{L_{q_c}^2(D)}. \label{eq:aux7}
\end{align}
Furthermore, restricting~\eqref{eq:aux3} to $D$ shows that $w := \hat{v}_\p|_D \in L^2(D)$ and $f_D := \left. f \right|_D\in L^2(D)$ satisfy
\[
  w+ \frac{s^2}{c_0^2} \left. \left[ \hat{V}_\p \big[ q_{c,\p} \hat{v}_\p \big] \right] \right|_D = f_D \quad \text{in } L^2(D).
\]
We already showed in~\eqref{eq:aux6} that $\hat{V}(f) = \hat{V}_\p(f)$ on $D$ if $\mathrm{supp}(f) \subset \overline{D}$ and conclude that 
$ [ \hat{V}_\p [ q_{c,\p} \hat{v}_\p ] ] \big|_D =   [\hat{V}(q_c w) ] \big|_D$.
Thus, the bound~\eqref{solBound} from Theorem~\ref{solEst} states that $\| w \|_{L_{q_c}^2(D)} \leq C |s| \| f_D \|_{L_{q_c}^2(D)}$, i.e., 
\begin{eqnarray*}
  \| \hat{v}_\p |_D \|_{L_{q_c}^2(D)} 
  &\leq& C |s| \big\| f_D \big\|_{L_{q_c}^2(D)} 
  \leq C |s| \big\| q_{c}^{1/2} f \big\|_{L^2(D)}\\&
  \leq &C |s| q_{c,+}^{1/2} \big\| f \big\|_{L^2(G_{2\rho})}
  \leq C |s| q_{c,+}^{1/2} \big\| f \big\|_{H^2}. 
\end{eqnarray*}
Combining the latter estimate with~\eqref{eq:aux7} shows that
\begin{equation} \label{eq:aux5}
  \| \hat{v}_\p \|_{H^2} \leq C_B(q_c) \, |s|^9 \| f \|_{H^2}, 
\end{equation}
and that~\eqref{eq:cruxCo} is satisfied if 
$C_A C_B |s|^{13} N^{-1} \| q_{c,\p} \|_{H^r} < 1/2$.
This implies that the number $N_0=N_0(s)$ is bounded from below by $c|s|^{13}$ for some constant $c=c(q_c, \sigma_0)$ and 
\begin{equation} \label{eq:boundInve}
   \bigg\| \bigg[I + \frac{s^2}{c_0} \hat{V}_\p (Q_N (q_{c,\p} \, \cdot)) \bigg]^{-1} \bigg\|_{H^2\to H^2}
   \leq 2 \bigg\| \bigg[I + \frac{s^2}{c_0} \hat{V}_\p( q_{c,\p} \, \cdot ) \bigg]^{-1} \bigg\|_{H^2\to H^2}
   \leq 2 C_B |s|^9.
\end{equation}

To get the claimed error estimate, we introduce the orthogonal projection $P_N: \, H^t \to T_N$ onto $T_N$ and recall first that $\| (I-P_N) v \|_{H^\ell} \leq C N^{-t+\ell} \| v \|_{H^t}$ for $v \in H^t$ and $0 \leq \ell \leq t$, and second that $\| (P_N - Q_N) v \|_{H^\ell} \leq C N^{-t+\ell} \| v \|_{H^t}$ for $v \in H^t$ and $3/2 < \ell \leq t$ (see~\cite{Saran2002}, Sect.~8.3 and 8.5 for proofs in one and two dimensions). Consider the unique solution $\hat{u}_\p \in H^2$ to
\[
  \hat{u}_\p + \frac{s^2}{c_0^2} \hat{V}_\p [q_{c,\p} \hat{v}_\p] 
  = - \frac{s^2}{c_0^2} \hat{V}_\p [q_{c,\p} \hat{u}^i_\p].
\]
Exploiting that $\hat{V}_\p$ diagonalizes on trigonometric polynomials we deduce that $\hat{V}_\p$ commutes with the orthogonal projection $P_N$ and obtain that 
\begin{align*}
  \left[ I + \frac{s^2}{c_0^2}\hat{V}_\p Q_N (q_{c,\p} \, \cdot) \right] [\hat{u}_{\p,N} - \hat{u}_\p]
  & = - \frac{s^2}{c_0^2} \hat{V}_\p  Q_N (q_{c,\p} \hat{u}^i_\p)  
  - \hat{u}_\p
  + \frac{s^2}{c_0^2} \hat{V}_\p Q_N (q_{c,\p} \hat{u}_\p) \\
  & = - \frac{s^2}{c_0^2} P_N \hat{V}_\p (q_{c,\p} \hat{u}^i_\p)  
  + \frac{s^2}{c_0^2} P_N \hat{V}_\p (q_{c,\p} \hat{u}_\p)
  - \hat{u}_\p \\
  & \quad - \frac{s^2}{c_0^2} \hat{V}_\p \left[ (Q_N-P_N) (q_{c,\p} (\hat{u}^i_\p - \hat{u}_\p)) \right] \\
  & =  P_N \hat{u}_\p - \hat{u}_\p 
  - \frac{s^2}{c_0^2} \hat{V}_\p \left[ (Q_N-P_N) (q_{c,\p} (\hat{u}^i_\p-\hat{u}_\p)) \right].
\end{align*}
Inverting the operator on the left in $H^2$ and exploiting that
$\| q_{c,\p} \hat{u}_\p\|_{H^2} = \| q_{c} \hat{u} \|_{H^2(D)}$ where $\hat{u}$ 
solves~\eqref{eq:hatUEq} yields the error estimate 
\begin{align}
  \| \hat{u}_{\p,N} - \hat{u}_\p \|_{H^2}
  & \stackrel{\eqref{eq:boundInve}}{\leq} 2 C_B |s|^9
  \left( \| P_N \hat{u}_\p - \hat{u}_\p \|_{H^2} + \frac{|s|^2}{c_0^2} \| \hat{V}_\p (Q_N-P_N) (q_{c,\p} (\hat{u}^i_\p- \hat{u}_\p)) \|_{H^2} \right) \nonumber \\
  & \leq C |s|^9 \left( N^{-t} \| \hat{u}_\p \|_{H^{2+t}} + |s|^4 \| (Q_N-P_N) (q_{c,\p} (\hat{u}^i_\p- \hat{u}_\p)) \|_{H^1}  \right) \nonumber \\
  & \leq C |s|^9 \left( N^{-t} \| \hat{u}_\p \|_{H^{2+t}} + |s|^4 N^{-r} \| q_{c,\p} \hat{u}^i_\p \|_{H^r} + |s|^4 N^{-1} \| q_{c,\p} \hat{u}_\p\|_{H^2} \right)  \label{eq:aux9} \\
  & \leq C |s|^9 \left( N^{-t} \| \hat{u}_\p \|_{H^{2+t}} + |s|^4 N^{-r} \| q_{c,\p} \hat{u}^i_\p \|_{H^r} + |s|^4 N^{-1} \| q_{c} \hat{u} \|_{H^2(D)} \right)  \nonumber \\
  & \stackrel{\eqref{eq:aux11}}{\leq} C |s|^9 \left( N^{-t} \| \hat{u}_\p \|_{H^{2+t}} + |s|^4 N^{-r} \| q_{c,\p} \hat{u}^i_\p \|_{H^r}\right.\nonumber\\&\qquad\left. + |s|^9 N^{-1} \| q_{c} \|_{C^2(D)} \| \hat{u}^i \|_{L^2_{q_c}(D)} \right).  \nonumber 
\end{align}
If $q_{c,\p} \hat{u}^i_\p \in H^2$, then $\hat{u}_\p = - (s^2/c_0^2) \hat{V}_\p (q_{c,\p} (\hat{u}^i_\p+ \hat{u}_\p))$ belongs to $H^3$: 
The term $q_{c,\p} \hat{u}^i_\p+ q_{c,\p} \hat{u}_\p$ belongs to $H^2$ since all three  functions belong to this Banach algebra and $\hat{V}_\p$ is bounded from $H^2$ into $H^3$. Moreover, the techniques used in~\eqref{eq:aux9} show that
\begin{align}
  \| \hat{u}_\p \|_{H^3} 
  & \leq \frac{|s|^2}{c_0^2} \| \hat{V}_\p (q_{c,\p} (\hat{u}^i_\p+ \hat{u}_\p)) \|_{H^3} 
  \leq C |s|^4  \big( \| q_{c,\p} \hat{u}^i_\p \|_{H^2} + \| q_{c} \hat{u}_\p \|_{H^2(D)} \big) \nonumber \\
  & \leq C |s|^4  \big( \| q_{c,\p} \hat{u}^i_\p \|_{H^2} + |s|^5 \| q_{c,\p} \|_{C^2(D)}  \| \hat{u}^i \|_{L^2_{q_c}(D)} \big) \nonumber \\
  & \leq C |s|^9  \big( \| q_{c,\p} \hat{u}^i_\p \|_{H^2} + \| q_{c,\p} \|_{C^2(G_{2\rho})}  \| \hat{u}^i \|_{L^2_{q_c}(D)} \big). \nonumber
\end{align}
For higher-order regularity, one considers the equation~\eqref{eq:aux3} in $H^{2+k}$ for $k\in\N$ with a right-hand side $f = (s^2/c_0^2) \hat{V}_\p(q_{c,\p} \hat{u}^i_\p) \in H^{2+k}$ for $q_{c,\p} \hat{u}^i_\p \in H^{1+k}$ and $q_{c,\p} \in C^{2}(G_{2\rho}) \cap H^{1+k}$. Iterating the estimate~\eqref{eq:aux13} shows that 
\[
  \| \hat{u}_\p \|_{H^{2+k}} 
  \leq 
  C(k) [|s|^4 \| q_{c,\p} \hat{u}^i_\p \|_{H^{1+k}} + |s|^{4(k-1)} \| q_{c,\p} \|_{H^{1+k}}^{k-1} \| q_{c,\p} \hat{u}_\p \|_{H^2}], \quad k=1,2,\dots.
\] 
In~\eqref{eq:aux9} we already exploited that $\| q_{c,\p} \hat{u}_\p \|_{H^2} \leq |s|^5 \| q_c \|_{C^2(G_{2\rho})} \| \hat{u}^i \|_{L^2_{q_c}(D)}$. Thus, 
\[
  \| \hat{u}_\p \|_{H^{2+k}} \leq C \left[ |s|^4 \| q_{c,\p} \hat{u}^i_\p \|_{H^{1+k}} 
  + |s|^{4(k-1)+5} \| q_{c,\p} \|_{H^{1+k}}^{k-1} \| q_c \|_{C^2(G_{2\rho})} \| \hat{u}^i \|_{L^2_{q_c}(D)} \right], \quad k\in\N. 
\]
In combination with~\eqref{eq:aux13}, these regularity estimates show that 
\begin{align*}
  & \| \hat{u}_{\p,N} - \hat{u}_\p \|_{H^2}
  \leq C |s|^9 \left( N^{-k} \| \hat{u}_\p \|_{H^{2+k}} + |s|^4 \| (Q_N-P_N) (q_{c,\p} (\hat{u}^i_\p- \hat{u}_\p)) \|_{H^1}  \right) \\
  & \leq C |s|^9 N^{-k} \left( \| \hat{u}_\p \|_{H^{2+k}} + |s|^4 \| q_{c,\p} \hat{u}^i_\p \|_{H^{1+k}} 
  + |s|^4 \| q_{c,\p} \|_{H^{1+k}} \| \hat{u}_\p \|_{H^{1+k}} \right) \\
  & \leq C |s|^{13} N^{-k} (1+\| q_{c,\p} \|_{H^{1+k}}) \left( \| \hat{u}_\p \|_{H^{2+k}} + \| q_{c,\p} \hat{u}^i_\p \|_{H^{1+k}} \right) \\
  & \leq C |s|^{18+4(k-1)} N^{-k} (1+\| q_{c,\p} \|_{H^{1+k}}) \left( \| q_{c,\p} \hat{u}^i_\p \|_{H^{1+k}} 
  + \| q_{c,\p} \|_{H^{1+k}}^{k-1} \| q_c \|_{C^2(G_{2\rho})} \| \hat{u}^i \|_{L^2_{q_c}(D)}  \right).
\end{align*}
\end{proof}

We now use Lemma 5.5 of \cite{lubich} to prove an error estimate for Collocation Convolution Quadrature (CCQ).
Assume that $\hat{u}^i_\p \in H^2$, that $q_{c,\p} \in H^2 \cap C^2(G_{2\rho})$, and define the operator 
$\cKN:H^2\to H^2$ by
\[
\cKN(\hat{u}^i_\p):=\hat{u}_{\p,N} - \hat{u}_\p
\]
where $\hat{u}_{\p,N}$ and $\hat{u}_\p$ are defined in Lemma \ref{difficult}. According to this lemma there is a constant $C_1$ independent of $s$ and $N$ such that if $N^{-1}|s|^{13} < C_1$ this operator is well defined and 
\[
\Vert \cKN\Vert_{H^2\to H^2}\leq C |s|^{18} N^{-1} 
\]
To apply Lemma 5.5 of \cite{lubich} letting $\epsilon= (C_1N)^{-1/13}$ we have that $\cKN$ is bounded and analytic on the half disks 
\[
\{\Re(s)>\sigma_0>0\}\cap \{|\epsilon s|<1\}
\]
So if $B=\sup_{|\zeta|<1}\delta(\zeta)$ and $\Delta t >  B (C_1 N)^{-1/13}$ then by Lemma 5.5 of \cite{lubich} we have the estimate
\begin{align}
\left(\Delta t\sum_{m=0}^M\Vert ({\cal K}_{\p,N}(\delta_t^{\Delta t})u^i)(m\Delta t)\Vert_{L^2_{q_c}(D)}^2\right)^{ {1/2} } 
& \leq C \left(\Delta t\sum_{m=0}^M\Vert ({\cal K}_{\p,N}(\delta_t^{\Delta t})u^i)(m\Delta t)\Vert_{H^2}^2\right)^{ {{1/2}} }  \nonumber\\
& \leq C N^{-1}\Vert u^i\Vert_{H^{18}_{\sigma_0}(\real_+,H^2)} \label{pertest}
\end{align}
for $\Delta t$ small enough.

We can now prove the main theorem of the paper. Let $u^{s,\Delta t,N}$ denote the fully discrete
convolution quadrature solution corresponding to applying convolution quadrature to the integral equation (\ref{IEColl})  for $\hat{u}^s_{\p,N}$ and having time step values $u^{s,\Delta t,N}_m\in T_N$, $m=0,1,\dots$\,.

\begin{theorem} \label{mainT} Let the conditions on $\delta(\xi)$ of Theorem \ref{semid} hold and suppose $N$ and $\Delta t$ are chosen so that $\Delta t >  B (C_1 N)^{-1/13}$. Suppose $q_{c,\p}\in H^2 \cap C^2(G_{2\rho})$. Then, provided $u^i\in H^{18}_{\sigma_0}(\real_+,H^2(D))$ for all $N$ large enough and $\Delta t$ small enough where $\Delta t=T/M$,
\[
\sum_{m=0}^M\left(\Delta t\Vert  u^s(t_m,\cdot)-u^{s,\Delta t,N}_m\Vert_{L_{q_c}^2(D)}^2\right)^{1/2}  
\leq
C(\Delta t^p+N^{-1})
\]
where $C$ depends on $T$ but not on $\Delta t$ and $N$ where $p$ is the order of the A-stable time stepping method.
\end{theorem}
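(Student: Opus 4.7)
The plan is to decompose the fully discrete error by the triangle inequality into a purely temporal semidiscretization error and a perturbation accounting for the spatial collocation, and then invoke Theorem~\ref{semid} and the estimate~\eqref{pertest} respectively. Specifically, I would write
\[
u^s(t_m) - u^{s,\Delta t,N}_m = \bigl[u^s(t_m) - u^{s,\Delta t}_m\bigr] + \bigl[u^{s,\Delta t}_m - u^{s,\Delta t,N}_m\bigr],
\]
take the discrete $\ell^2_{\Delta t}$-norm in $m$ with values in $L^2_{q_c}(D)$, and use the ordinary triangle inequality in that Hilbert-space-valued sequence space. The first bracket is already controlled by Theorem~\ref{semid} (with $r=p+3$), which yields a contribution of order $(\Delta t)^p \|u^i\|_{H^{p+3}_{\sigma_0}(\real_+,L^2_{q_c}(D))}$.

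For the second bracket, the key point is to identify it with the convolution quadrature applied to the operator $\cKN$ from Lemma~\ref{difficult}. By linearity of CQ and the equivalence between the original and periodized Lippmann--Schwinger equations established in~\eqref{eq:aux6}, the semidiscrete operator associated with~\eqref{ushateq} restricted to $D$ agrees with that of~\eqref{IEFreqPer}, while the fully discrete operator is the collocation solution operator from~\eqref{IEColl}. Hence
\[
u^{s,\Delta t}_m - u^{s,\Delta t,N}_m = - \bigl(\cKN(\partial_t^{\Delta t})\,u^i\bigr)(m\Delta t) \quad \text{in } L^2_{q_c}(D).
\]
Under the hypothesis $\Delta t > B(C_1 N)^{-1/13}$ the operator $\cKN(s)$ is analytic and bounded by $C|s|^{18}N^{-1}$ on the required half-disk, which is exactly the setting of Lubich's Lemma~5.5 used to derive~\eqref{pertest}. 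Applying that estimate directly bounds the second bracket by $C N^{-1}\|u^i\|_{H^{18}_{\sigma_0}(\real_+,H^2)}$.

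Combining the two bounds and noting that the regularity assumption $u^i \in H^{18}_{\sigma_0}(\real_+,H^2(D))$ controls both the norm $\|u^i\|_{H^{p+3}_{\sigma_0}(\real_+,L^2_{q_c}(D))}$ (through the boundedness of $q_c$ and the embedding $H^2(D)\hookrightarrow L^2_{q_c}(D)$) and $\|u^i\|_{H^{18}_{\sigma_0}(\real_+,H^2)}$ yields the claimed rate $(\Delta t)^p + N^{-1}$. The conditional stability restriction $\Delta t > B(C_1 N)^{-1/13}$ is exactly what is required for~\eqref{pertest} to be available; without it, Lubich's Lemma~5.5 cannot be invoked and the temporal perturbation argument breaks down.

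The main obstacle I expect is the identification step that expresses the semidiscrete-to-fully-discrete difference as $\cKN(\partial_t^{\Delta t})(u^i)$. This is essentially bookkeeping rather than analysis, but it requires verifying that the CQ weights generated by $\delta(\xi)/\Delta t$ are the same in both the periodized-continuous and periodized-collocation problems, and that the restriction from $G_{2\rho}$ to $D$ commutes with CQ in time; both facts follow from linearity of the transform and of the multistep map, but they must be checked cleanly to justify applying the $L^2_{q_c}(D)$-norm bound of Lemma~\ref{difficult} inside the CQ estimate~\eqref{pertest}.
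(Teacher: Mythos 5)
Your proposal is correct and follows essentially the same route as the paper: the error is split by the triangle inequality into the temporal semidiscretization error, bounded by Theorem~\ref{semid}, and the semidiscrete-to-fully-discrete perturbation, identified with $\cKN(\partial_t^{\Delta t})u^i$ and bounded by the estimate~\eqref{pertest} from Lubich's Lemma~5.5 under the stated restriction $\Delta t > B(C_1N)^{-1/13}$. Your extra bookkeeping on the equivalence of the periodized and original problems on $D$ is exactly the point the paper dispatches with the remark that the two solutions agree on $D$ before discretization.
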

\begin{remark} The requirement that $N(\Delta t)^{13}$ be large enough is a crushing 
stability condition.  We do not see this in practice as we shall show in the next section.  
Improvement of this bound would be highly desirable.
\end{remark}

\begin{proof}
Let $u^{s,\Delta t}$ denote the semi-discrete
convolution quadrature solution corresponding to applying convolution quadrature to the integral equation for $\hat{u}^s$ (see (\ref{ushateq})) and having time step values $u^{s,\Delta t}_m\in L^2_{q_c}(D)$,  $m=0,1,\dots$\,. Of course
\[
\Vert u^s(t_m,\cdot)-u^{s,\Delta t,N}_m\Vert_{L_{q_c}^2(D)}\leq  \Vert u(t_m,\cdot)-u^{s,\Delta t}_m\Vert_{L_{q_c}^2(D)}+C\Vert u^{s,\Delta t}_m-u^{s,\Delta t,N}_m\Vert_{L^2(D)}.
\]
But under the conditions of Theorem \ref{semid} the first term on the right hand side is estimated by $(\Delta t)^p$ since the periodized solution and the original solution agree on $D$ before discretization.  The second term above is estimated by (\ref{pertest}).
\end{proof}

\section{Numerical Results}\label{numerics}
In this section we provide a preliminary numerical experiment designed to test observed convergence rates compared to those predicted in Theorem \ref{mainT}, and also test if the stability constraint in that theorem is sharp.  Of course with only one special test problem, our results are at most indications for further work.  The results are computed  in two spatial dimensions.

The problem we shall solve is to compute the scattering of the incident field
\[
u^i(x,t)=\sin(a(t-x_1/c_0))\exp(-b(t-x_1/c_0)^2),\quad a=4,\, b=1.4,\, c=2,
\]
from a circular domain  of radius 0.275.  Note that the incident wave is not perfectly zero at $t=0$ in $D$. 
The external domain has sound speed $c_0=1$ and $c(x)=\sqrt{2}$ in $D$ so this also tests if the method works for $q_c<0$.  The final time equals $T=4$.  

Using the method of \cite{Banjai_2008} we choose $M$ and $N$ and solve (\ref{IEColl}) for
\[
s=s_m=\frac{\delta(\lambda \xi^m)}{\Delta t},\quad m=0,\dots, M,
\]
where $\delta(\xi)=(\xi^2-4\xi+3)/2$ corresponds to the choice of BDF2 in (\ref{multi}),
\[
\xi= \exp(-2\pi/(M+1))
\]
and where $\lambda<1$ is a stability parameter chosen as described in Remark 5.11 of \cite{Banjai_2008}. It is possible
that the choice of this parameter further stabilizes the scheme making the stability constraint in Theorem \ref{mainT}
unnecessary.  
The incident field $\hat{u}^i$ is now chosen to be
\[
\hat{u}^i_{\lambda,m}=\sum_{j=0}^M\lambda^ju^i(x,t_j)\xi^{jm}, \quad m=0,\dots,M.
\]
This gives us $M+1$ approximate solutions $\hat{u}^s_{\p,N,m}\in T_N$, $m=1,\dots, M+1$ where $T_N$ is the space of two dimensional trigonometric polynomials on $G_{2\rho}$ with $\rho=0.275$ of degree $N$ in each direction.

An important point is that because (\ref{IEColl})  is of the second kind, we can compute the solutions $\hat{u}^s_m$ using 
a two grid iterative method (see \cite{hohage01}).  

Once the Fourier-Laplace modes are known the time steps 
can be obtained approximately via
\[
u^{s,\Delta t,N,\lambda}_m=\frac{\lambda^{-m}}{M+1}\sum_{j=0}^M\hat{u}^s_{\p,N,m}\xi^{-mj}.
\]
Here the superscript emphasizes that this quantity is an approximation to the field $u^{s,\Delta t,N}$ analyzed earlier.
For a detailed discussion of this approach see \cite{Banjai_2008}.  This includes a detailed derivation of the approximate equivalence of this solution with the original convolution quadrature solution computed by marching on in time for a boundary integral equation.

Note that  $q_{c,\p}\in H^0$ so we would expect to see slower than the $O(N^{-1})$  spatial convergence  rate from Theorem \ref{mainT}.

In our first experiment we choose a fixed value of the spatial parameter $N$ and increase the number of time-steps $M$.  The results are shown in Fig.~\ref{fig1}.  For coarse time discretizations we expect the error to be dominated by the time stepping error. As expected, convergence is second order in $\Delta t$ until a minimum error is reached, presumably due to the fixed spatial mesh. The minimum error decreases with $N$ . After this the error rises gradually, perhaps reflecting
the need for a stability restriction between $N$ and $\Delta t$.  This should be investigated more thoroughly in
three dimensions, but this outside the scope of this paper.

\begin{figure}
\begin{center}
\resizebox{0.7\textwidth}{!}{\includegraphics{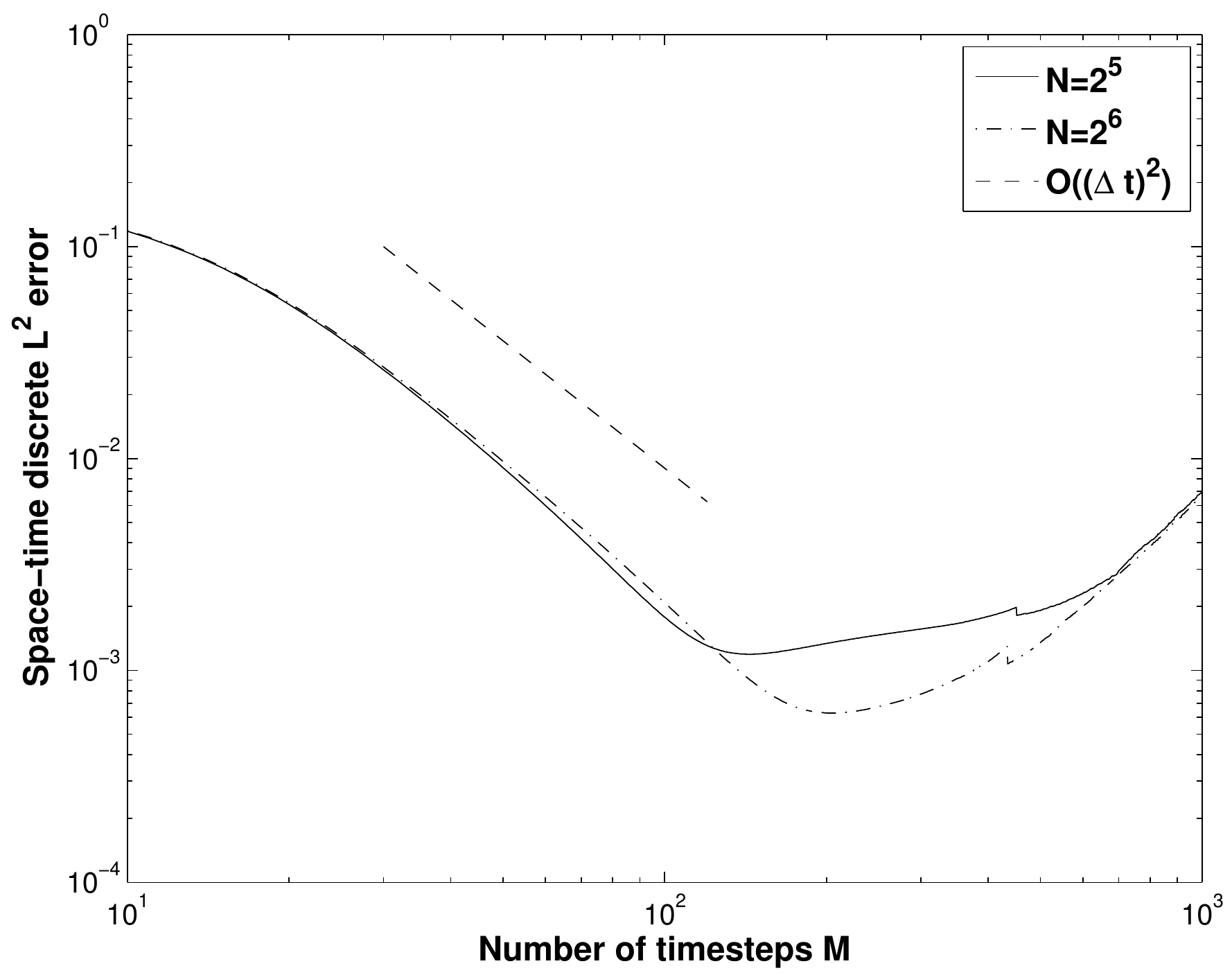}}
\end{center}
\caption{Error as a function of the number of time steps $M$ for two fixed spatial meshes $N=2^5$ and $N=2^6$.}
\label{fig1}
\end{figure}

Our second result in Fig.~\ref{fig2}  shows convergence for two fixed numbers of time steps $M$ as the spatial discretization parameter varies from $M=2^2,\dots, 2^9$.   We see rapid initial convergence reminiscent of pre-asymptotic convergence
for finite difference methods, then a period of roughly $O(N^{-1})$ convergence and finally a plateau presumably
due to the error from time discretization.  Unlike Fig.~\ref{fig1} the error does not rise markedly as $N$ increases after reaching a minimum value.  This is consistent with our theory in that the stability constraint involves a lower bound on $\Delta t$ but $N$ is free to increase without bound.

\begin{figure}
\begin{center}
\resizebox{0.7\textwidth}{!}{\includegraphics{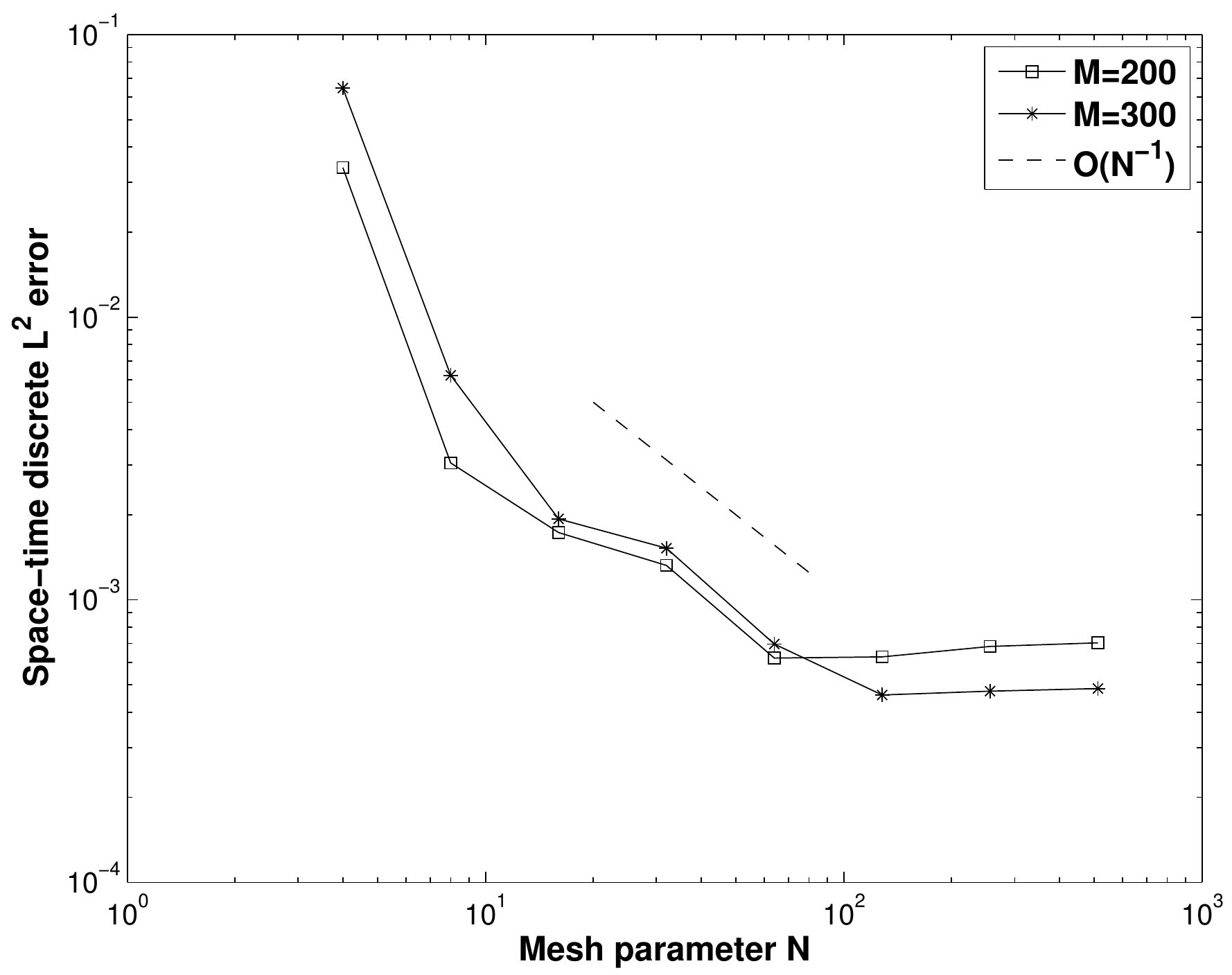}}
\end{center}
\caption{Error as a function of the spatial grid parameter $N$ for two fixed time steps with $M=200$ and $M=300$.}
\label{fig2}
\end{figure}

\section{Conclusion}\label{concl}
We have presented some basic theory for a time domain volume integral equation appropriate for the wave equation.
This equation is coercive in an appropriate norm, and hence Convolution Quadrature can be applied to the Galerkin equations.  Instead we apply a collocation scheme to discretize in space, and using a perturbation argument to
obtain a convergence result with a very strong stability constraint.  Our numerical results suggest that this constraint its not active (at least for our simple example).  Clearly a much more thorough program of numerical testing is needed, and it would also be desirable to test a Galerkin scheme based on periodized trigonometric polynomials to avoid any question of stability constraints, and possibly  improve the regularity requirements.
\section{Acknowledgements}
The research of A.L. is supported in part by an exploratory project granted by the University of Bremen in the framework of its institutional strategy, funded by the excellence initiative of the federal and state governments of Germany.

The research of P.M.  is supported in part  by US NSF grant number DMS 1114889 and AFOSR grant number FA9550-13-1-0199.

\end{document}